\newtheorem{thm}{Theorem}[section]
\newtheorem{cor}[thm]{Corollary}
\newtheorem{lem}[thm]{Lemma}
\newcommand{\R}{{\mathbb{R}}}
\newcommand{\vp}{\varphi}
\newcommand{\osc}{\operatornamewithlimits{osc}}
\newcommand{\supp}{\operatorname{supp}}
\newcommand{\D}{\nabla}
\newcommand{\La}{\triangle}
\newcommand{\bs}{\backslash}
\begin{document}

\title{Long time existence of smooth solution for the Porous Medium Equation in a bounded domain}

\author[Sunghoon Kim]{Sunghoon Kim}
\address{Sunghoon Kim:
Department of Mathematics and PMI (Pohang Mathematics Institute), Pohang University of Science and Technology (POSTECH),\\
Hyoja-Dong San 31, Nam-gu, Pohang 790-784, South Korea 
}
\email{math.s.kim@postech.ac.kr}

\maketitle

\begin{abstract}
In this paper, we are going to show the long time existence of
the smooth solution for the porous medium equations in a smooth
bounded domain:
\begin{equation}
\begin{cases}
u_t=\La u^m\quad\text{in $\Omega\times [0,\infty)$}\\
u(x,0)=u_0>0\quad\text{in $\Omega$}\\
u(x,t)=0\quad\text{for $x\in\partial\Omega$}
\end{cases}
\end{equation}
where $m>1$ is the permeability. The proof is based on  the short time existence of $C^{2,\overline{\gamma}}_{s}$-smooth solution, the global $C^{1}_{s}$-estimate, the H\"older estimate of divergence type degenerate equation with measurable coefficients and $C^{1,\overline{\gamma}}_s$-estimate of mixed type equation with Lipschitz coefficients.
\end{abstract}








\setcounter{equation}{0}
\setcounter{section}{0}
\section{introduction}
\setcounter{equation}{0}
\setcounter{thm}{0}

We consider in this paper the initial value problem for the
\emph{Porous Medium Equation}(PME)
\begin{equation}\label{eq-main}
\begin{cases}
u_t=\La u^m\qquad \qquad \qquad \text{in $\Omega$}\\
u(x,0)=u_0>0\quad \qquad \text{in $\Omega$}\\
u(x,t)=0\qquad \qquad \text{for $x\in\partial\Omega$}
\end{cases}
\end{equation}
posed in a bounded domain $\Omega$ with the range of exponents $m>1$, with initial data $u_0$
nonnegative, integrable and compactly supported. \\
\indent In \cite{KL}, Kim and Lee dealt with the short time existence of solution to PME in a bounded domain. More precisely, the main result in their paper is that, if $u$ is a solution to \eqref{eq-main} and $f=u^m$, then, in some regularity conditions on the initial data $f^0=u_0^m$ and its first and second derivatives, the solution of the degenerate equation
\begin{equation}\label{eq-main-for-f}
f_t=mf^{\frac{m-1}{m}}\La f=mf^{\alpha}\La f, \qquad \qquad \left(f=u^m, \quad \alpha=1-\frac{1}{m}\right)
\end{equation}
exists on a short time interval $[0,T)$ and $f\in C_s^{2,\overline{\gamma}}(\Omega)$ on $[0,T)$,  i.e., the solution and its first and second derivative are H\"older continuous with respect to a suitable Riemannian metric $s$ under which the distance between two points $x_1$ and $x_2$ in $\Omega$ is equivalent to the function
\begin{equation*}
\frac{|x_1-x_2|}{d(x_1)^{\alpha}+d(x_2)^{\alpha}+\sum_{i=1}^{n-1}|x_1-x_2|^{\alpha}}
\end{equation*}
with $d=d(x)$ denoting the distance to the boundary of $\Omega$.\\
\indent In this work, we will show that, under the same assumptions, the solution $f$ of \eqref{eq-main-for-f} exists on the long time $[0,\infty)$ and it is also of class $C_s^{2,\overline{\gamma}}$, i.e., assuming that the initial value $f^0$ is strictly positive in the interior of a domain $\Omega\subset\R^n$, with $f^0=0$ on the boundary, and denoting by $d$ the distance to the boundary of $\Omega$, we will obtain the following result.
\begin{thm}\label{The-Main-Theorem-of-this-paper}
If the functions $f^0$, $Df^0$ and $d^{\alpha}D^2f^0$, restricted to the compact domain $\Omega$, extended continuously up to the boundary of $\Omega$, with extensions which are H\"older continuous on $\Omega$ of class $C^{\overline{\gamma}}(\Omega)$, for some $\overline{\gamma}>0$ and $Df_0\neq 0$ along $\partial\Omega$, then the initial value problem
\begin{equation}\label{eq-cases-main-of-this-paper-section-PMT}
\begin{cases}
f_t=\frac{1}{m}f^{\alpha}\La f, \qquad (x,t)\in\Omega\times(0,\infty),\\
f(x,0)=f^0(x), \qquad x\in\Omega\\
f(x,t)=0, \qquad x\in\partial\Omega\times[0,\infty)
\end{cases}
\end{equation}
admits a solution $f$ which is $C_s^{2,\overline{\gamma}}$-smooth up to the boundary, when $0<t<\infty$.
\end{thm}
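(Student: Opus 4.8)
The plan is to combine the short-time existence theorem of Kim and Lee \cite{KL} with a continuation argument driven by a priori estimates that remain finite on every bounded time interval. By \cite{KL}, the hypotheses on $f^0$ (strict positivity in the interior, vanishing on $\partial\Omega$, $Df^0\neq 0$ on $\partial\Omega$, and $f^0,\ Df^0,\ d^\alpha D^2f^0\in C^{\overline{\gamma}}(\Omega)$) yield some $T_0>0$ and a solution $f$ of \eqref{eq-cases-main-of-this-paper-section-PMT} on $\Omega\times[0,T_0)$ with $f(\cdot,t)\in C_s^{2,\overline{\gamma}}(\Omega)$ for each $t$, and with $f(\cdot,t)$ retaining the same structural properties as $f^0$. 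Let $T^{*}\in(0,\infty]$ be the supremum of all $T$ for which \eqref{eq-cases-main-of-this-paper-section-PMT} has such a $C_s^{2,\overline{\gamma}}$-solution on $[0,T)$. If $T^{*}<\infty$ I will show the solution extends past $T^{*}$, contradicting maximality; hence $T^{*}=\infty$.

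The first task is to keep the problem non-degenerate on $[0,T^{*})$. Comparison with the constant supersolution $\|f^0\|_{L^\infty(\Omega)}$ gives $0\le f\le\|f^0\|_{L^\infty(\Omega)}$ for all $t$, while comparison with upper and lower barriers built from the distance function $d$ in a boundary collar (using that $\partial\Omega$ is smooth, so $d\in C^2$ and $\La d$ is bounded near $\partial\Omega$, together with $f^0$ comparable to $d$ and $Df^0\neq 0$ on $\partial\Omega$) yields constants $0<c_1\le c_2$, depending only on $f^0,\Omega$ and $T^{*}$, with
\begin{equation*}
c_1\, d(x)\le f(x,t)\le c_2\, d(x)\qquad\text{on }\Omega\times[0,T^{*}),
\end{equation*}
and in particular $|Df|\ge c_1>0$ along $\partial\Omega$. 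Thus the weight $d^\alpha$ is comparable to $f^\alpha$ uniformly on $[0,T^{*})$ and the metric $s$ is the correct geometry throughout this interval.

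Next I would close the higher regularity in three steps, each with constants uniform on $[0,T^{*})$. (i) The global $C^{1}_{s}$-estimate bounds $Df$ and the $s$-scaled gradient, so $f$ is Lipschitz with respect to $s$ and the coefficient $f^\alpha$ in \eqref{eq-cases-main-of-this-paper-section-PMT}, a positive weight comparable to $d^\alpha$, is bounded and measurable. (ii) Differentiating \eqref{eq-cases-main-of-this-paper-section-PMT} in space, each $\partial_i f$ solves a degenerate equation of divergence type whose coefficients are merely measurable but controlled by the weight $d^\alpha$; the De Giorgi--Nash--Moser-type Hölder estimate for such equations, carried out in the geometry of $s$, then gives $Df\in C_s^{\overline{\gamma}}$ uniformly on $[0,T^{*})$. (iii) With $Df$ now Hölder in the $s$-metric, the coefficients of \eqref{eq-cases-main-of-this-paper-section-PMT} are Lipschitz in $s$ and the equation is of mixed type, degenerate in the normal direction at $\partial\Omega$ and uniformly parabolic in the interior; the $C_s^{1,\overline{\gamma}}$-Schauder estimate for mixed-type equations with Lipschitz coefficients upgrades the solution to $f(\cdot,t)\in C_s^{2,\overline{\gamma}}(\Omega)$ with a bound uniform on $[0,T^{*})$.

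Finally, the uniform $C_s^{2,\overline{\gamma}}$ bound together with time-equicontinuity (read off from $f_t=\frac{1}{m}f^\alpha\La f$, which is uniformly bounded) lets me extract a limit $f(\cdot,t)\to g$ in $C_s^{2,\overline{\gamma}'}(\Omega)$ for each $\overline{\gamma}'<\overline{\gamma}$ as $t\uparrow T^{*}$; by lower semicontinuity of the $C_s^{\overline{\gamma}}$-seminorms under this convergence, $g\in C_s^{2,\overline{\gamma}}(\Omega)$. The limit $g$ is strictly positive in the interior, vanishes on $\partial\Omega$, satisfies $c_1 d\le g\le c_2 d$ and $Dg\neq 0$ on $\partial\Omega$, and $g,\ Dg,\ d^\alpha D^2 g\in C^{\overline{\gamma}}(\Omega)$; hence $g$ meets the hypotheses of the Kim--Lee theorem, so \cite{KL} applied with initial datum $g$ continues the solution to $[0,T^{*}+\tau)$ for some $\tau>0$, contradicting the definition of $T^{*}$. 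Therefore $T^{*}=\infty$ and $f$ is $C_s^{2,\overline{\gamma}}$-smooth up to the boundary for all $0<t<\infty$. I expect the main obstacle to be steps (ii)--(iii): running the De Giorgi/Moser iteration and the Schauder theory in the weighted geometry of $s$ — that is, establishing the correct Poincaré and Sobolev inequalities for the degenerate weight $d^\alpha$ and controlling the matching between the degenerate boundary layer and the uniformly parabolic interior so that the Hölder exponent $\overline{\gamma}$ is not lost — while simultaneously keeping the barrier estimate $f\asymp d$, and hence all constants, under control up to the finite time $T^{*}$.
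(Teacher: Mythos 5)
Your continuation-argument skeleton matches the paper's: short-time existence from \cite{KL}, a maximal interval $(0,T^*)$, a priori estimates on $[0,T^*)$, and re-application of the short-time theorem at $t=T^*$. The global $C^1_s$ bound, the non-degeneracy $f\asymp d$, the De Giorgi--Nash--Moser step for the tangential gradient, and the Schauder upgrade are all the right ingredients and are used in the paper in essentially the same logical order.

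There is, however, one genuine gap in step (ii). You write that ``each $\partial_i f$ solves a degenerate equation of divergence type whose coefficients are merely measurable but controlled by the weight $d^\alpha$,'' but differentiating $f_t=\frac1m f^\alpha\La f$ in space produces the term $\frac{\alpha}{m} f^{\alpha-1}(\partial_i f)\La f$, and the factor $f^{\alpha-1}$ blows up at $\partial\Omega$ since $\alpha<1$; this is not a divergence-form operator with bounded coefficients relative to the weight, so the Moser iteration you invoke does not directly apply. The paper circumvents this by first performing the hodograph change of variable $x_n=h(x',z,t)$, which flattens the boundary and makes the equation for $h$ have the weight sitting \emph{outside} the operator (the degeneracy $z^\alpha$ is a fixed coordinate function, not the unknown). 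After that transform, the tangential derivatives $h_{x_i}$ ($i<n$) do satisfy the divergence-type degenerate equation \eqref{eq-for-sol-h-x-i-of-fixed-bondary-problem-in-intro} and the Harnack/Hölder estimate (Section~4) applies — but, crucially, the normal derivative $h_z$ does \emph{not}, as the paper explicitly notes at the start of Section~5. Your proposal treats all $\partial_i f$ uniformly and thus misses this second obstruction. The paper handles $h_z$ with a separate argument: a barrier function built from scaled Gaussians (Lemma \ref{lem-barrier-fuction-for-subsolution-with-nontrivial-gradient}) plus an oscillation-decay/dyadic-scaling scheme (Lemma \ref{lem-C-1-gamma-regularity-near-the-boundary}, Corollary \ref{cor-for-holder-estimate-for-h-z}) that proves $C^{1,\gamma}$ at the flat boundary for the non-divergence mixed equation. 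Without the hodograph transform and this additional barrier construction, steps (ii)--(iii) of your proposal do not close, so the continuation argument cannot be completed as written.
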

As in the Section 4 in \cite{KL}, the coordinate change, $\left(z=f(x',x_n,t)\to x_n=h(x',z,t)\right)$, converts domain
\begin{equation*}
\Omega\in\R^n \quad \Rightarrow \quad \mathcal{D}\in \R^{n-1}\times\R^+
\end{equation*}
and the equation \eqref{eq-main-for-f} to 
\begin{equation}\label{eq-for-sol-h-of-fixed-bondary-problem-in-intro}
h_t=z^{\alpha}\Bigg[{\La}_{x'}h+\Bigg(-\frac{1+|\nabla_{x'}h|^2}{h_{z}}\Bigg)_{z}\Bigg], \qquad \left(\alpha=1-\frac{1}{m}\right).
\end{equation} 
In addition, $h_{x_i}$, $(i=1,\cdots,n-1)$ satisfies
\begin{equation}\label{eq-for-sol-h-x-i-of-fixed-bondary-problem-in-intro}
w_t=z^{\alpha}\nabla_{k}\left(a^{kl}\nabla_{l}w\right), \qquad \left(k,l=1,\cdots,n\right)
\end{equation}
where 
\begin{equation*}
a^{nn}=\frac{1+|\nabla_{x'}h|^2}{h_z^2}, \quad a^{k'n}=-\frac{2h_{k'}}{h_n},\quad a^{nk'}=0 \quad \mbox{and}\quad a^{k'l'}=\delta_{kl} ,\qquad (k',l'=1,\cdots,n-1).
\end{equation*}
\indent Since the solution $f$ of \eqref{eq-cases-main-of-this-paper-section-PMT} is strongly related to the solution $h$ of \eqref{eq-for-sol-h-of-fixed-bondary-problem-in-intro}, by the Theorem 1.1 in \cite{KL}, the solution $h$ exists on a short time interval. Let $(0,T)$ be the maximal interval of existence for $C_{s}^{2,\overline{\gamma}}$-solution $h$ and suppose that $T<\infty$. Then, solution does not belong to the space $C_{s}^{2,\overline{\gamma}}$ at time $t=T$ anymore. However, the existence theory gave $h\in W^{1,2}$. This implies that the coefficients of the equation \eqref{eq-for-sol-h-of-fixed-bondary-problem-in-intro} are only measurable and bounded. Thus, if it could be shown that $h\in C_s^{1,\overline{\gamma}}$, then the coefficients in \eqref{eq-for-sol-h-of-fixed-bondary-problem-in-intro} would belong to $C_s^{\overline{\gamma}}$. Then, Schauder estimate in \cite{KL} provides $C^{2,\overline{\gamma}}_s$ regularity on $h$ at $t=T$ and we can get a extended interval $[0,T')$, $(T<T')$ in which $f\in C_{s}^{2,\overline{\gamma}}$. This contradicts the maximality of $T$. Therefore, $T$ must be infinity and the Theorem \ref{The-Main-Theorem-of-this-paper} follows. Hence, the missing step for the regularity problem to be solved is
\begin{equation}\label{eq-missing step for the regularity}
h\in W^{1,2} \Rightarrow h\in C_s^{1,\overline{\gamma}}, \qquad (0<\overline{\gamma}<1).
\end{equation}
\indent In this paper, we show the long time existence of solution $f=u^m$ by solving the missing step \eqref{eq-missing step for the regularity}\\
\indent The paper is divided into five parts: In Part 1 (Section 2) we review the metric $ds$ which controls the diffusion and state theorem for the short time existence of $f=u^m$ in \cite{KL}. In Part 2 (Section 3) we deal with global estimates, including Gradient estimate and Non-degeneracy. In part 3 and 4 (Section 4 and 5) we establish the H\"older estimate for the solution of degenerated parabolic equation with divergence type and $C^{1,\overline{\gamma}}_S$ estimate for the solution of mixed equation with Lipschitz coefficients. Finally, in Section 6, we prove the long time existence of the solution $f$ of \eqref{eq-main-for-f} which is in $C_s^{2,\overline{\gamma}}(\Omega)$.

\section{Preliminaries}
\setcounter{equation}{0}
\setcounter{thm}{0}

The diffusion in \eqref{eq-for-sol-h-of-fixed-bondary-problem-in-intro} is governed by the Riemannian metric $ds$ where
\begin{equation*}
ds^2=\frac{dx_1^2+\cdots+dx_n^2}{2x_n^{\alpha}}.
\end{equation*}
The distance between two points $x^1=(x_1^1,\cdots,x_n^1)$ and $x^2=(x_1^2,\cdots,x_n^2)$ in this metric is a function $s[x^1,x^2]$, which is equivalent to the function 
\begin{equation*}
\overline{s}\left[x^1,x^2\right]=\frac{\sum_{i=1}^{n}|x_i^1-x_i^2|}{|x_n^1|^{\frac{\alpha}{2}}+|x_n^2|^{\frac{\alpha}{2}}+\sum_{i=1}^{n-1}|x_i^1-x_i^2|^{\frac{\alpha}{2}}}
\end{equation*}
in the sense that 
\begin{equation*}
s\leq C\overline{s} \qquad \mbox{and} \qquad \overline{s}\leq Cs
\end{equation*} 
for some constant $C>0$. For the parabolic problem we use the parabolic distance 
\begin{equation*}
s\left[(x^1,t_1),(x^2,t_2)\right]=s\left[x^1,x^2\right]+\sqrt{|t_1-t_2|}.
\end{equation*} 
In terms of this distance, we can define H\"older semi-norm and norm of continuous function $g$ on a compact subset $\mathbb{A}$ of the half-space $\{(x_1,\cdots,x_n,t):x_n\geq 0\}$:
\begin{equation*}
\begin{aligned}
\|g\|_{H_s^{\overline{\gamma}}(\mathbb{A})}&=\sup_{P_1\neq P_2\in\mathbb{A}}\frac{g(P_1)-g(P_2)}{s(P_1-P_2)^{\overline{\gamma}}},\\
\|g\|_{C_s^{\overline{\gamma}}(\mathbb{A})}=\|g\|_{C^0(\mathbb{A})}+&\|g\|_{H_s^{\overline{\gamma}}(\mathbb{A})}, \qquad \|g\|_{C^0(\mathbb{A})}=sup_{P\in\mathbb{A}}\left|g(P)\right|.
\end{aligned}
\end{equation*}
With these norms, the space $C_s^{2,\overline{\gamma}}(\mathbb{A})$ is the Banach space of all such functions with norm:
\begin{equation*}
\|g\|_{C^{2,\overline{\gamma}}_s(\mathbb{A})}=\|g\|_{C^{\overline{\gamma}}_s(\mathbb{A})}+\sum_{i=1}^{n}\|g_{x_i}\|_{C^{\overline{\gamma}}_s(\mathbb{A})}+\|g_t\|_{C^{\overline{\gamma}}_s(\mathbb{A})}+\sum_{1\leq i\leq j\leq n}^{n}\|x_n^{\alpha}g_{x_ix_j}\|_{C^{\overline{\gamma}}_s(\mathbb{A})}.
\end{equation*}
Imitating the case where the operators are defined on the half-space $\{(x_1,\cdots,x_n,t):x_n\geq 0\}$ we can define the distance function $s$ in $\Omega$. In the interior of $\Omega$ the distance will be equivalent to the standard Euclidean distance, while around any point $x_0\in\partial\Omega$, $s$ is defined as the pull back of the distance on the half space $\{(x_1,\cdots,x_n,t):x_n\geq 0\}$ through a map $\vp:\{(x_1,\cdots,x_n,t):x_n\geq 0\} \to \Omega$ that straightens the boundary of $\Omega$ near $x_0$.\\
\indent It can be easily shown that the distance between two points $P_1$ and $P_2$ in $\Omega$ is equivanlent to the function
\begin{equation*}
\overline{s}(P_1,P_2)=\frac{|P_1-P_2|}{d(P_1)^{\alpha}+d(P_2)^{\alpha}+\sum_{i=1}^{n-1}|P_1-P_2|^{\alpha}}
\end{equation*}
with $d=d(P)$ denoting the distance to the boundary of $\Omega$. The parabolic distance in the metric is equivalent to the function
\begin{equation*}
s\left[(P_1,t_1),(P_2,t_2)\right]=s\left[P_1,P_2\right]+\sqrt{|t_1-t_2|}.
\end{equation*} 
Suppose that $\mathbb{A}$ is a subset of $\Omega\times[0,\infty)$. As above, we denote by $C_{s}^{\overline{\gamma}}(\mathbb{A})$ the space of H\"older continuous functions on $\mathbb{A}$ with respect to the metric $s$ and by $C_s^{2,\overline{\gamma}}(\mathbb{A})$ the space of all functions $w$ on $\mathbb{A}$ such that $w_t$, $w_i$ and $d^{\alpha}w_{ij}$, with $i,j\in\{1,\cdots,n\}$ and with $d$ denoting the distance function to the boundary of $\Omega$, extend continuously up to the boundary of $\mathbb{A}$ and the extensions are H\"older continuous on $\mathbb{A}$ of class $C_{s}^{\overline{\gamma}}(\mathbb{A})$. Then, they are both Banach spaces under the norm $\|w\|_{C_s^{\overline{\gamma}}(\mathbb{A})}$ and 
\begin{equation*}
\|w\|_{C^{2,\overline{\gamma}}_s(\mathbb{A})}=\|w\|_{C^{\overline{\gamma}}_s(\mathbb{A})}+\sum_{i=1}^{n}\|w_{x_i}\|_{C^{\overline{\gamma}}_s(\mathbb{A})}+\|w_t\|_{C^{\overline{\gamma}}_s(\mathbb{A})}+\sum_{1\leq i\leq j\leq n}^{n}\|d^{\alpha}w_{x_ix_j}\|_{C^{\overline{\gamma}}_s(\mathbb{A})}.
\end{equation*}
Under this metric, it is known that the problem \eqref{eq-cases-main-of-this-paper-section-PMT} has the $C^{2,\overline{\gamma}}_s$ solution for a short time interval $(0,T)$. we now state the short time existence for \eqref{eq-cases-main-of-this-paper-section-PMT}.
\begin{thm}[Theorem 1.1 in \cite{KL}]\label{thm-short}
Let $d$ be the distance to the boundary $\Omega$. If the functions $f^0$, $Df^0$ and $d^{\alpha}D^2f^0$, restricted to the compact domain $\Omega$, extended continuously up to the boundary of $\Omega$, with extensions which are H\"older continuous on $\Omega$ of class $C^{\overline{\gamma}}(\Omega)$, for some $\overline{\gamma}>0$ and $Df_0\neq 0$ along $\partial\Omega$, then there exists a number $T>0$ for which the initial value problem
\begin{equation*}
\begin{cases}
f_t=mf^{\alpha}\La f, \qquad (x,t)\in\Omega\times(0,\infty),\\
f(x,0)=f^0(x), \qquad x\in\Omega\\
f(x,t)=0, \qquad x\in\partial\Omega\times[0,\infty)
\end{cases}
\end{equation*}
admits a solution $f$ which is $C_s^{2,\overline{\gamma}}$-smooth up to the boundary, when $0<t<T$.
\end{thm}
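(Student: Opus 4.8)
The plan is to recover the short-time solution from the linear Schauder theory for the degenerate parabolic operator that appears after flattening the free boundary, via a fixed-point scheme in the weighted space $C_s^{2,\overline{\gamma}}$. First I would localize: cover $\overline{\Omega}$ by finitely many coordinate charts. In an interior chart, where $f^0>0$, the equation $f_t=mf^{\alpha}\La f$ is uniformly parabolic and classical quasilinear parabolic Schauder theory yields a short-time solution. In a chart centered at a boundary point $x_0\in\partial\Omega$, I would straighten $\partial\Omega$ by the map $\vp$ of Section 2 and then apply the hodograph (von Mises) transform $z=f(x',x_n,t)\leftrightarrow x_n=h(x',z,t)$. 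The assumption $Df^0\ne 0$ along $\partial\Omega$ guarantees $f^0_{x_n}\ne 0$ near the boundary, so the transform is a genuine change of variables and $h^0_z=1/f^0_{x_n}$ is bounded and bounded away from zero as $z\to 0^+$. The equation \eqref{eq-main-for-f} then becomes \eqref{eq-for-sol-h-of-fixed-bondary-problem-in-intro}, a problem on the fixed domain $\mathcal{D}\subset\R^{n-1}\times\R^+$ with the degeneracy factor $z^{\alpha}$ at $z=0$ that is exactly the weight built into the metric $ds$. I would check that the hypotheses that $f^0$, $\nabla f^0$ and $d^{\alpha}D^2f^0$ be of class $C^{\overline{\gamma}}(\Omega)$ translate, under the transform, into $h^0\in C_s^{2,\overline{\gamma}}(\mathcal{D})$, and I would record that $w=h_{x_i}$ satisfies the divergence-form equation \eqref{eq-for-sol-h-x-i-of-fixed-bondary-problem-in-intro}, which is useful for controlling tangential derivatives.

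The core ingredient is a Schauder theory for the linearization. Freezing the coefficients of \eqref{eq-for-sol-h-of-fixed-bondary-problem-in-intro} at the datum $h^0$ produces a linear operator $L_0w=w_t-z^{\alpha}\bigl(a_0^{ij}w_{ij}+b_0^iw_i\bigr)$, where $a_0^{nn}=(1+|\nabla_{x'}h^0|^2)/(h^0_z)^2$, $a_0^{k'l'}=\delta_{k'l'}$, the cross terms come from $-2\nabla_{x'}h^0/h^0_z$, and all $a_0^{ij},b_0^i$ lie in $C_s^{\overline{\gamma}}$ with $a_0^{ij}$ uniformly elliptic (the $w_{zz}$ coefficient is bounded below by $(h^0_z)^{-2}>0$). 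I would establish, or quote from \cite{KL}, the a priori estimate
\[
\|w\|_{C_s^{2,\overline{\gamma}}(\mathcal{D}\times[0,T])}\le C\Bigl(\|L_0w\|_{C_s^{\overline{\gamma}}(\mathcal{D}\times[0,T])}+\|w(\cdot,0)\|_{C_s^{2,\overline{\gamma}}(\mathcal{D})}\Bigr),
\]
together with existence and uniqueness for the Cauchy–Dirichlet problem $L_0w=g$, $w(\cdot,0)=w^0$, $w|_{z=0}=0$. Away from $z=0$ this is standard; near $z=0$ one rescales using the natural parabolic dilation of the metric $s$ (under which $z^{\alpha}\partial_{zz}$ scales like $\partial_t$), compares with the explicitly solvable constant-coefficient model $w_t=z^{\alpha}\bigl(\sum_{k<n}w_{kk}+w_{zz}\bigr)$ on the half-space, and patches the resulting local estimates with the interior ones through a covering/perturbation argument in the spirit of Daskalopoulos–Hamilton and Koch. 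This weighted Schauder estimate is the first main obstacle.

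With the linear theory in hand I would run a fixed-point argument. Write \eqref{eq-for-sol-h-of-fixed-bondary-problem-in-intro} as $L_0h=\mathcal{Q}[h]$, where $\mathcal{Q}[h]=z^{\alpha}\bigl[(a^{nn}[h]-a_0^{nn})h_{zz}+(\text{cross terms})\bigr]$ collects the difference between the full right-hand side and its linearization at $h^0$, so that $\mathcal{Q}[h^0]=0$; note $\mathcal{Q}$ is smooth in $(\nabla_{x'}h,h_z)$ as long as $h_z$ stays away from zero, which holds on a short interval by continuity from $h^0_z$. Let $\widetilde h_0$ solve $L_0\widetilde h_0=0$ with $\widetilde h_0(\cdot,0)=h^0$, $\widetilde h_0|_{z=0}=0$, and define $\mathcal{T}:h\mapsto\widetilde h$ by $L_0\widetilde h=\mathcal{Q}[h]$, $\widetilde h(\cdot,0)=h^0$, $\widetilde h|_{z=0}=0$, on the ball $\mathcal{B}_R=\{h:\,h(\cdot,0)=h^0,\ \|h-\widetilde h_0\|_{C_s^{2,\overline{\gamma}}(\mathcal{D}\times[0,T])}\le R\}$. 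Using that $C_s^{\overline{\gamma}}$ is a Banach algebra so that $h\mapsto\mathcal{Q}[h]$ maps $C_s^{2,\overline{\gamma}}$ Lipschitz-continuously into $C_s^{\overline{\gamma}}$, that $\mathcal{Q}$ is higher order near $h^0$, and that $h-h^0$ carries a gain of a power of $T$ in a norm of slightly lower Hölder exponent (interpolated against the propagated $C_s^{2,\overline{\gamma}}$ bound), one shows that for $R$ and $T$ small $\mathcal{T}$ maps $\mathcal{B}_R$ into itself and is a strict contraction; closing this step in the full weighted Hölder norm — where the solution need not be continuous down to $t=0$ — is the second delicate point. Its fixed point is the desired $h$ solving \eqref{eq-for-sol-h-of-fixed-bondary-problem-in-intro}. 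Finally I would invert the hodograph transform and the flattening map, checking that both preserve $C_s^{2,\overline{\gamma}}$ (again because $f_{x_n}=1/h_z$ is controlled away from zero), and glue the boundary-chart solutions with the interior one by uniqueness, producing a single $f\in C_s^{2,\overline{\gamma}}(\overline{\Omega})$ on $\Omega\times(0,T)$, which is the assertion of the theorem.
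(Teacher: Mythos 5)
The paper states this as Theorem~1.1 of \cite{KL} and does not reproduce a proof, so there is no internal argument to compare against. Your outline --- flatten the boundary, perform the hodograph transform $z=f\leftrightarrow x_n=h$ (justified by $Df^0\neq 0$ on $\partial\Omega$), establish weighted Schauder estimates for the linearized degenerate operator in $C_s^{2,\overline{\gamma}}$ by scaling to the half-space model $w_t=z^{\alpha}\La w$, close the nonlinear problem by a contraction-mapping scheme, and glue with classical interior parabolic theory --- is precisely the strategy carried out in \cite{KL} (in the tradition of Daskalopoulos--Hamilton and Koch), so your approach and the cited one coincide.
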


\section{Global $C^{1}_{s}$-estimate}
\setcounter{equation}{0}
\setcounter{thm}{0}

This section is devoted to prove some properties of the solution $f$ to \eqref{eq-cases-main-of-this-paper-section-PMT}, including global estimates, gradient estimate on the boundary, etc... . To get them we construct sub and super solutions to the problem \eqref{eq-cases-main-of-this-paper-section-PMT}. Now, we first deal with the $L^{\infty}$ estimate of $f$.
\begin{lem}[$L^{\infty}$-estimate]\label{L-infty-estimate}
Let $d$ be the distance to the boundary of $\partial\Omega$ and let $f$ be the solution of \eqref{eq-cases-main-of-this-paper-section-PMT} with initial data $f^0$ such that
\begin{equation*}
f^0,\,\, Df^0,\,\, d^{\alpha}D^2f^0 \,:\qquad \mbox{continuous up to the boundary $\partial\Omega$}.
\end{equation*}
There are constants $c>0$ and $C\leq \infty$ such that
         \begin{enumerate}
         \item  $$\|f\|_{L^{\infty}(\Omega)}\leq \min\left\{\frac{C}{(1+t)^{1/\alpha}}, \|f_0\|_{L^{\infty}(\Omega)}\right\}, \qquad \left(\alpha=1-\frac{1}{m}\right)$$      
         \item There is a ball $B_{\delta_0}\in\Omega$ such that 
                     $$\inf_{B_{\delta_0}}f(x,t)\geq \frac{c}{(1+t)^{1/\alpha}}.$$                             
         \end{enumerate}

\end{lem}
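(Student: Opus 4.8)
\emph{Proof plan.} Pass to the density $u=f^{1/m}$, which in the interior of $\Omega$ is a classical solution of the Dirichlet porous medium equation $u_t=\La u^m$ (absorbing the multiplicative constant of \eqref{eq-cases-main-of-this-paper-section-PMT} by rescaling time), with $u=0$ on $\partial\Omega$ and $u(\cdot,0)=u_0:=(f^0)^{1/m}$; since $f=u^m$ and $1/\alpha=m/(m-1)$, the asserted bounds on $f$ are equivalent to $\|u(\cdot,t)\|_{L^{\infty}}\le\min\{C(1+t)^{-1/(m-1)},\|u_0\|_{L^{\infty}}\}$ and $\inf_{B_{\delta_0}}u(\cdot,t)\ge c(1+t)^{-1/(m-1)}$. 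Both come from sandwiching $u$ between two separated (self-similar) barriers built from the stationary profile $F>0$ solving the sublinear elliptic problem $\La F^m=-\tfrac{1}{m-1}F$ in $\Omega$, $F=0$ on $\partial\Omega$. Existence and uniqueness of $F$ follow from the sub/supersolution method (the nonlinearity is sublinear and concave), and, writing $G=F^m$ which solves $-\La G=\tfrac{1}{m-1}G^{1/m}$ with bounded right-hand side, $C^{1,\beta}$ elliptic regularity plus Hopf's lemma give $G\asymp d$ near $\partial\Omega$, hence $F\asymp d^{1/m}$ up to the boundary. A direct computation shows that, for every $t_0>0$,
\begin{equation*}
U(x,t)=(t+t_0)^{-\frac{1}{m-1}}F(x)
\end{equation*}
is an \emph{exact} solution of $U_t=\La U^m$, and that for every $0<\e_0\le1$
\begin{equation*}
\underline u(x,t)=(t+1)^{-\frac{1}{m-1}}\e_0F(x)
\end{equation*}
is a \emph{subsolution} (here one uses $\e_0^{m-1}\le1$, since $m>1$).

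\emph{Upper bound (part (1)).} The hypothesis $Df^0\neq0$ on $\partial\Omega$ forces the nonnegative function $f^0$ to vanish exactly linearly at $\partial\Omega$, i.e. $f^0\asymp d$, whence $u_0\asymp d^{1/m}\asymp F$ near $\partial\Omega$; together with $u_0>0$ in the interior this gives $M:=\sup_\Omega(u_0/F)<\infty$, so for $t_0:=M^{-(m-1)}$ one has $U(x,t_0)=t_0^{-1/(m-1)}F(x)\ge u_0(x)$ on all of $\Omega$. Since $u$ and $U$ both vanish on $\partial\Omega$, the comparison principle for the porous medium equation yields $u(x,t)\le U(x,t+t_0)$ for all $t\ge0$, hence $\|u(\cdot,t)\|_{L^{\infty}}\le(t+t_0)^{-1/(m-1)}\|F\|_{L^{\infty}}\le C(1+t)^{-1/(m-1)}$. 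The maximum principle (valid for the degenerate equation) also gives $\|u(\cdot,t)\|_{L^{\infty}}\le\|u_0\|_{L^{\infty}}$, and taking $m$-th powers yields (1).

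\emph{Lower bound (part (2)).} Using again $u_0\asymp F$ near $\partial\Omega$ and $u_0>0$ inside, $\mu:=\inf_\Omega(u_0/F)>0$; fix $\e_0:=\min\{1,\mu\}$, so $\underline u(x,0)=\e_0F(x)\le u_0(x)$ in $\Omega$ while $\underline u=0=u$ on $\partial\Omega$. Comparison then gives $u(x,t)\ge\underline u(x,t)=\e_0(t+1)^{-1/(m-1)}F(x)$. Choosing any ball $B_{\delta_0}$ with $\overline{B_{\delta_0}}\subset\Omega$, on which $F\ge\eta>0$, we obtain $u(x,t)\ge\e_0\eta(1+t)^{-1/(m-1)}$ there, and taking $m$-th powers yields (2) with $c=(\e_0\eta)^m$.

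\emph{Main difficulty.} The substantive points are the two barrier arguments at the degenerate boundary: one needs a comparison principle for $u_t=\La u^m$ on $\Omega\times[0,\infty)$ valid up to $\partial\Omega$, where the equation is no longer parabolic and where $u$, $u_0$ and the traces of both barriers all vanish, and one must know that these vanish at the \emph{same} rate $d^{1/m}$ --- precisely where $Df^0\neq0$ on $\partial\Omega$ enters, guaranteeing $f^0\asymp d$ and hence $u_0\asymp F$. Establishing the boundary behaviour $F\asymp d^{1/m}$ of the elliptic profile and the up-to-the-boundary comparison are the two technical ingredients; granted these, the remainder is the routine separation-of-variables verification sketched above.
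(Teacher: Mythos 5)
Your argument is correct but follows a genuinely different route from the paper's. For the upper bound the paper does not invoke the stationary profile $F$ at all: it compares $f$ directly with the explicit space--time supersolution $z(x,t)=A(R^2-|x|^2)/(1+t)^{1/\alpha}$ on a large ball $B_R\supset\Omega$, which costs only an elementary computation and no elliptic regularity or Hopf-lemma analysis of a boundary rate. For the lower bound the paper avoids the hypothesis $u_0\gtrsim F$ entirely (and hence does not need $Df^0\neq0$ at this step, which is not even listed among the assumptions of Lemma~\ref{L-infty-estimate}, only of Theorem~\ref{The-Main-Theorem-of-this-paper}): it splits into two cases according to whether $(m-1)\La u_0^m\le -u_0$ holds on all of $\Omega$. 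If so, $u_0$ is an elliptic supersolution and dominates the profile $g$ solving $(m-1)\La g^m+g=0$, and comparison with the separated solution $g(x)/(1+t)^{1/(m-1)}$ finishes. Otherwise, on the open set $\Omega_{u_0}$ where the inequality fails, the paper builds a smaller initial datum $\overline u_0\le u_0$ that is a strict elliptic subsolution, uses the Aronson--B\'enilan-type differential inequality $\overline u_t\ge -\overline u/\bigl((m-1)(1+t)\bigr)$ (from Problem~8.1(i) in \cite{Va1}) and Gronwall to obtain $\overline u(x,t)\ge \overline u_0(x)/(1+t)^{1/(m-1)}$, then places the ball $B_{\delta_0}$ inside $\Omega_{u_0}$. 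Your single barrier $\e_0F(x)(1+t)^{-1/(m-1)}$ is conceptually cleaner and handles both bounds uniformly, at the cost of establishing the boundary asymptotics $F\asymp d^{1/m}$ and of genuinely needing the nondegeneracy $Df^0\neq0$ to get $\mu=\inf_\Omega(u_0/F)>0$; the paper's two-case Gronwall argument is heavier but works for initial data that need not be comparable to the friendly-giant profile near $\partial\Omega$. Both yield the stated decay, and your verification that $\e_0F(1+t)^{-1/(m-1)}$ is a subsolution (via $\e_0^{m-1}\le1$) and that $U$ is exact is correct.
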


\begin{proof}
i) We take a ball $B_R=B_R(0)$ of radius $R$ strictly containing $\Omega$, especially we assume that $\Omega\subset B_{\frac{R}{2}}$, and consider the function $z(x,t)$ defined in $B_R\times(0,\infty)$ by
\begin{equation*}
z(x,t)=\frac{A(R^2-|x|^2)}{(1+t)^{\frac{1}{\alpha}}}
\end{equation*}
for suitable constant $A$ to be chosen presently. Since $z$ is positive in $B_R\times(0,\infty)$, we have
\begin{equation*}
f(x,t)=0<z(x,t) \qquad \mbox{on $\partial\Omega\times(0,\infty)$}.
\end{equation*}
In addition, if we choose $A$ larger than $\frac{4\|f_0\|_{L^{\infty}(\Omega)}}{R^2}$, we get
\begin{equation*}
f_0(x)\leq z(x,0) \qquad \mbox{in $\Omega$}.
\end{equation*}
Finally, we will obtain the inequality $mz^{\alpha}\La z-z_t\leq 0$ in $\Omega$ whenever
\begin{equation*}
A>\left[\frac{R^{2(1-\alpha)}}{2mn\alpha}\right]^{\frac{1}{\alpha}}.
\end{equation*}
With this choice, the comparison principle implies that 
\begin{equation*}
f(x,t)\leq z(x,t)=\frac{A(R^2-|x|^2)}{(1+t)^{\frac{1}{\alpha}}}\leq \frac{AR^2}{(1+t)^{\frac{1}{\alpha}}}=\frac{C}{(1+t)^{\frac{1}{\alpha}}}\qquad \mbox{in $\Omega\times(0,\infty)$}.
\end{equation*}
On the other hand, one can easily check $f(x,t)\leq \|f_0\|_{L^{\infty}(\Omega)}$ by comparison principle. Hence the proof of (i) is finished.\\
ii) We first suppose that the initial data $u_0=f_0^{\frac{1}{m}}$ satisfies the following condition,
\begin{equation*}
(m-1)\La u_0^m\leq-u_0 \qquad \forall x\in\Omega.
\end{equation*} 
Then, the initial data $u_0$ is controlled from below by the solution $g(x)$ of
\begin{equation*}
\begin{cases}
\begin{aligned}
(m-1)\La g^m+&g=0 \qquad \mbox{in $\Omega$}\\
g=&0 \qquad \quad \mbox{on $\partial\Omega$},
\end{aligned}
\end{cases}
\end{equation*}
i.e.,
\begin{equation*}
u_0(x)\geq g(x).
\end{equation*}
In addition, the function
\begin{equation*}
\frac{g(x)}{(1+t)^{\frac{1}{m-1}}}
\end{equation*}
is also a solution of porous medium equation \eqref{eq-main} with the initial data $g$. Hence, by the comparison principle, we have
\begin{equation*}
u(x,t) \geq \frac{g(x)}{(1+t)^{\frac{1}{m-1}}}.
\end{equation*}
Therefore, for any ball $B_{\delta_0}\in\Omega$, we have
\begin{equation*}
\inf_{x\in B_{\delta_0}}f(x,t) \geq \frac{\inf_{x\in B_{\delta_0}}g(x)}{(1+t)^{\frac{m}{m-1}}}.
\end{equation*}
Next, we denote by $\Omega_{u_0}$ the set
\begin{equation*}
\Omega_{u_0}=\{x\in\Omega : (m-1)\La u_0^m>-u_0\}
\end{equation*}
and we assume that $\Omega_{u_0}\neq\emptyset$. Let's define the function $\overline{u}_0$ such that
\begin{equation*}
\overline{u}_0=u_0 \quad \mbox{on $\Omega_{u_0}$}
\end{equation*}
and 
\begin{equation*}
(m-1)\La\overline{u}_0^m>-\overline{u}_0 \qquad \mbox{on $\Omega$}.
\end{equation*}
Then, by comparison principle for elliptic equation, $\overline{u}_0\leq u_0$ in $\Omega$.
We also let $\overline{u}$ be the solution of \eqref{eq-main} with the initial data $u_0$ being replaced by $\overline{u}_0$. Then, by comparison principle for parabolic equation, we have
\begin{equation*}
u\geq \overline{u} \qquad \mbox{in $\Omega$}.
\end{equation*}
On the other hand, by the Problem 8.1(i) in \cite{Va1}, $\overline{u}$ satisfies
\begin{equation*}
\overline{u}_t\geq -\frac{\overline{u}}{(m-1)(1+t)}.
\end{equation*}
Thus, by the Gronwell's inequality, we have
\begin{equation*}
\overline{u}(x,t)\geq \frac{\overline{u}_0(x)}{(1+t)^{\frac{1}{m-1}}}.
\end{equation*}
Since $\Omega_{u_0}$ is open set, there exists a ball $B_{\delta_0}\in\Omega_{u_0}$. Hence
\begin{equation*}
u(x,t)\geq \overline{u}(x,t)\geq \frac{\overline{u}_0(x)}{(1+t)^{\frac{1}{m-1}}}=\frac{u_0(x)}{(1+t)^{\frac{1}{m-1}}} \qquad \mbox{on $B_{\delta_0}$}.
\end{equation*}
Therefore
\begin{equation*}
\inf_{x\in B_{\delta_0}}f(x,t)=\inf_{x\in B_{\delta_0}}u^m(x,t)\geq \frac{\inf_{x\in B_{\delta_0}}u_0^m(x)}{(1+t)^{\frac{m}{m-1}}}=\frac{\inf_{x\in B_{\delta_0}}f_0(x)}{(1+t)^{\frac{m}{m-1}}}
\end{equation*}
, which implies the conclusion.
\end{proof}
Next property is the gradient estimate which will play an important role to show $C_{s}^{1,\overline{\gamma}}$ continuity of solution.

\begin{lem}[Gradient estimate]\label{lem-Gradient-estimate}
Let $d$ be the distance to the boundary of $\partial\Omega$ and let $f$ be the solution of \eqref{eq-cases-main-of-this-paper-section-PMT} with initial data $f^0$ such that
\begin{equation*}
f^0,\,\, Df^0,\,\, d^{\alpha}D^2f^0 \,:\qquad \mbox{continuous up to the boundary $\partial\Omega$}.
\end{equation*}
There are uniform constant $0<C_o<\infty$ such that
\begin{equation}\label{eq-gradient-bound-of-f-1}
\|\D f\|_{L^{\infty}( \Omega)}<C_0\frac{\|f_0\|_{C^1(\Omega)}}{(1+t)^{1/\alpha}}
\end{equation}
\end{lem}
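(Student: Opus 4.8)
The decay factor $(1+t)^{-1/\alpha}$ in \eqref{eq-gradient-bound-of-f-1} is precisely the one in the $L^{\infty}$-bound of Lemma \ref{L-infty-estimate}(i), and this is dictated by the scaling of \eqref{eq-cases-main-of-this-paper-section-PMT}: the equation is invariant under $f(x,t)\mapsto\lambda f(x,\lambda^{\alpha}t)$, under which $\nabla f$ scales exactly like $f$. The plan is to transfer the $L^{\infty}$-decay of Lemma \ref{L-infty-estimate} to $\nabla f$, treating the boundary layer and the interior separately and then patching.

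\emph{The boundary.} Since $f(\cdot,t)\equiv 0$ on $\partial\Omega$ for every $t$, all tangential derivatives of $f$ along $\partial\Omega$ vanish identically, so on $\partial\Omega$ one has $|\nabla f|=|\partial_{\nu}f|$. I would sandwich $f$ between two explicit barriers: a supersolution $\overline{f}(x,t)=K\,\Psi(x)(1+t)^{-1/\alpha}$, where $\Psi\in C^{2}(\overline{\Omega})$ is positive in $\Omega$, vanishes on $\partial\Omega$ with $|\nabla\Psi|\neq 0$ there (a suitable multiple of the torsion function $-\La\Psi=1$ in $\Omega$, which has $\La\Psi<0$ near $\partial\Omega$, works once $K$ is taken large enough, comparable to $\|f_{0}\|_{C^{1}(\Omega)}$), and a subsolution $\underline{f}(x,t)=\epsilon\,\varphi_{1}(x)(1+t)^{-1/\alpha}$ built from the first Dirichlet eigenfunction $\varphi_{1}$, with $\epsilon>0$ small; using that $f_{0}(x)$ is comparable to $d(x)$ near $\partial\Omega$---here the hypothesis $Df_{0}\neq 0$ along $\partial\Omega$ enters---the comparison principle gives $\underline{f}\le f\le\overline{f}$ in $\Omega\times(0,\infty)$. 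In particular $f$ is comparable to $d(x)(1+t)^{-1/\alpha}$ in a fixed collar of $\partial\Omega$, and comparing $\partial_{\nu}$-derivatives at $\partial\Omega$ yields $|\nabla f|\le C\|f_{0}\|_{C^{1}(\Omega)}(1+t)^{-1/\alpha}$ on $\partial\Omega$.

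\emph{The interior, and the main obstacle.} Away from $\partial\Omega$, the subsolution barrier together with Lemma \ref{L-infty-estimate}(i) shows $f$ is comparable to $(1+t)^{-1/\alpha}$ on compact subsets of $\Omega$; combined with the two-sided barrier in the collar, $f$ is comparable to $d(x)(1+t)^{-1/\alpha}$ throughout a collar. For a point $x$ at small distance $\delta=d(x)$ from $\partial\Omega$ I would rescale
\begin{equation*}
\widetilde{f}(y,s)=\frac{(1+t_{0})^{1/\alpha}}{\delta}\,f\big(x+\delta y,\ t_{0}+m(1+t_{0})\,\delta^{2-\alpha}\,s\big),
\end{equation*}
which turns \eqref{eq-cases-main-of-this-paper-section-PMT} into $\widetilde{f}_{s}=\widetilde{f}^{\alpha}\La\widetilde{f}$ on the unit cylinder, with $\widetilde{f}$ now trapped between positive constants independent of $\delta$ and $t_{0}$, hence uniformly parabolic; classical interior parabolic estimates give $|\nabla_{y}\widetilde{f}|\le C$, and unscaling yields $|\nabla f(x,t_{0})|\le C(1+t_{0})^{-1/\alpha}$ with $C$ independent of $\delta$. (Points bounded away from $\partial\Omega$ use the analogous rescaling by $\|f(\cdot,t_{0})\|_{\infty}$, and small times are covered by the short-time $C^{2,\overline{\gamma}}_{s}$-bound of Theorem \ref{thm-short}.) Patching these regions gives \eqref{eq-gradient-bound-of-f-1}. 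The crux---and the main obstacle---is exactly this uniformity up to the degenerate boundary $\partial\Omega$: a direct maximum-principle argument for $Q=|\nabla f|^{2}$ is unavailable, since the Bochner identity
\begin{equation*}
\partial_{t}Q=\tfrac1m f^{\alpha}\La Q-\tfrac2m f^{\alpha}|\nabla^{2}f|^{2}+2\alpha\,\frac{f_{t}}{f}\,Q
\end{equation*}
has a zeroth-order coefficient $2\alpha f_{t}/f$ of indefinite sign, and the naive estimate would require $f_{t}/f\le-\tfrac{1}{\alpha^{2}(1+t)}$, which is violated already by the self-similar solution $f=g^{m}(1+t)^{-1/\alpha}$. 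One must therefore either absorb that term into the good term $-\tfrac2m f^{\alpha}|\nabla^{2}f|^{2}$ through a local Bernstein computation adapted to the degeneracy, or---as above---reduce to a genuinely non-degenerate equation by rescaling, which is why the non-degeneracy subsolution near $\partial\Omega$ is used essentially rather than merely cosmetically.
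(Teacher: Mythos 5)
Your proposal follows essentially the same strategy as the paper: trap $f$ between separable barriers of the form $\psi(x)(1+t)^{-1/\alpha}$ that vanish linearly at $\partial\Omega$, read off the boundary gradient decay by comparing normal derivatives, and get the interior decay by a rescaling that makes the equation uniformly parabolic (with the short-time range covered by Theorem \ref{thm-short}). The only implementation differences are cosmetic: the paper builds its barrier from the exact self-similar profile $\psi$ solving $\La\psi+\tfrac{1}{m-1}\psi^{1/m}=0$, so that $\psi(x)(1+t)^{-1/\alpha}$ is a genuine solution, whereas you use super/subsolutions built from the torsion function and the first Dirichlet eigenfunction; and the paper rescales by $f_k(x,t)=k^{1/\alpha}f(x,kt)$ on the fixed inner domain $\Omega_{\sigma_0}$ while you rescale around each point by $\delta=d(x)$. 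Your $\delta$-scaled argument in the collar is in fact a bit more careful than the paper's, which passes from the gradient bound \emph{on} $\partial\Omega$ to a bound in a neighborhood $\Omega\setminus\Omega_{\sigma_0}$ without spelling out the intermediate step; your version makes the uniformity in $\delta$ explicit.
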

\begin{proof}
By the Theorem \ref{thm-short}, there exist constants $t_0>0$ and $C_1>0$ such that
\begin{equation}\label{eq-grad-bound-for-short-time}
|\nabla f(x,t)|<C_1 \qquad \forall x\in\Omega,\,\,0\leq t<t_0.
\end{equation}
Thus, \eqref{eq-gradient-bound-of-f-1} holds for a short time.\\
\indent Let $\psi$ be the solution of 
\begin{equation*}
\begin{cases}
\La\psi+\frac{1}{m-1}\psi^{\frac{1}{m}}=0 \qquad \mbox{in $\Omega$}\\
\quad \psi(x)=0 \qquad \qquad \mbox{on $\partial\Omega$}.
\end{cases}
\end{equation*}
Since $0<|\nabla f_0|<\infty$ on $\partial\Omega$, we can select constants $0<c_1<1<c_2<\infty$ such that 
\begin{equation*}
c_1\psi(x)\leq f_0(x)\leq c_2\psi(x), \qquad \mbox{in $\Omega$}.
\end{equation*}
Then, by the comparison principle, we get
\begin{equation*}
\frac{c_1\psi(x)}{(1+t)^{\frac{1}{\alpha}}}\leq f(x,t) \leq \frac{c_2\psi(x)}{(1+t)^{\frac{1}{\alpha}}}, \qquad x\in\Omega.
\end{equation*}
Since $\|\psi(x)\|_{C^{1}(\partial\Omega)}<\infty$, we also have
\begin{equation}\label{eq-bound-of-gradient-of-f-on-boundary}
|\nabla f(x,t)|\leq \frac{c_2\|\psi(x)\|_{C^{1}(\partial\Omega)}}{(1+t)^{\frac{1}{\alpha}}}\leq \frac{c_2\|f_0(x)\|_{C^{1}(\Omega)}}{c_1(1+t)^{\frac{1}{\alpha}}}<\infty, \qquad \forall 0\leq t<\infty, x\in\partial\Omega.
\end{equation}
Denoting by $\Omega_{\sigma}$, for $\sigma>0$, the set
\begin{equation*}
\Omega_{\sigma}=\{x\in\Omega:\textbf{dist}(x,\partial\Omega)\geq \sigma\}.
\end{equation*}
Then, by \eqref{eq-bound-of-gradient-of-f-on-boundary}, there exist constants $\sigma_0>0$ and $c_3>0$ such that 
\begin{equation}\label{eq-bound-of-gradient-of-f-on-near-boundary}
|\nabla f(x,t)|\leq \frac{c_3\| f_0(x)\|_{C^{1}(\Omega)}}{(1+t)^{\frac{1}{\alpha}}}<\infty \qquad \forall 0\leq t<\infty, x\in \Omega\bs\Omega_{\sigma_0}.
\end{equation}
To finish the proof, let us define the family of rescaled functions
\begin{equation*}
f_k(x,t)=k^{\frac{1}{\alpha}}f(x,kt)
\end{equation*}
in $\Omega_{\sigma_0}$ with parameter $k>0$. Then, they are again solutions of
\begin{equation}\label{eq-of-f-for-gradient-1}
f_t=mf^{\alpha}\La f.
\end{equation}
Since $f>0$ in $\Omega_{\sigma_0}$, the coefficient $mf^{\alpha}$ is bounded above and below. Hence the equation \eqref{eq-of-f-for-gradient-1} becomes uniformly parabolic and the solutions are smooth with derivatives locally bounded in terms of the bounds for $f$. We conclude that there exist a constant $c_4>0$ such that
\begin{equation*}
|\nabla f_k(x,1)|<c_4 \qquad x\in \Omega_{\sigma_0}.
\end{equation*}
This means that
\begin{equation}\label{eq-inequality-of-gradi-f-1}
|\nabla f(x,k)|<\frac{c_4}{k^{\frac{1}{\alpha}}} \qquad x\in \Omega_{\sigma_0}.
\end{equation}
For some constant $c_5>0$, putting $t=k$ and $c_4=c_5\|f_0\|_{C^1(\Omega)}$ in inequality \eqref{eq-inequality-of-gradi-f-1}. Then
\begin{equation}\label{eq-inequality-of-gradi-f-2}
|\nabla f(x,t)|<\frac{c_5\|f_0\|_{C^1(\Omega)}}{t^{\frac{1}{\alpha}}} \qquad x\in \Omega_{\sigma_0}.
\end{equation}
Hence, we get
\begin{equation}\label{eq-grad-bound-for-long-time}
|\nabla f(x,t)|<\frac{c_6\|f_0\|_{C^1(\Omega)}}{(1+t)^{\frac{1}{\alpha}}}, \qquad \forall t\geq t_0\,\,x\in \Omega_{\sigma_0}
\end{equation}
where $c_6=c_5\left(1+\frac{1}{t_0}\right)^{\frac{1}{\alpha}}$. By \eqref{eq-grad-bound-for-short-time}, \eqref{eq-bound-of-gradient-of-f-on-near-boundary} and \eqref{eq-grad-bound-for-long-time}, \eqref{eq-gradient-bound-of-f-1} holds for all $t>0$.
\end{proof}
    
Finally in this section, we will show the non degeneracy of solution $f$ to \eqref{eq-cases-main-of-this-paper-section-PMT} near the boundary. This estimate quarantees the uniformly ellipticity of coefficients $a^{kl}$ in \eqref{eq-for-sol-h-x-i-of-fixed-bondary-problem-in-intro}.

\begin{lem}[Non-degeneracy estimate]\label{lem-Non-degeneracy-estimate}
Let $d$ be the distance to the boundary of $\partial\Omega$ and let $f$ be the solution of \eqref{eq-cases-main-of-this-paper-section-PMT} with initial data $f^0$ such that
\begin{equation*}
f^0,\,\, Df^0,\,\, d^{\alpha}D^2f^0 \,:\qquad \mbox{continuous up to the boundary $\partial\Omega$}.
\end{equation*}
There are uniform constant $0<c_1<\infty $ such that
          $$ \frac{c_0}{(1+t)^{1/\alpha}} < \|\D f\|_{L^{\infty}( \partial\Omega)} $$                   
\end{lem}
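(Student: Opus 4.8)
The plan is to obtain the lower bound directly from the lower barrier for $f$ that was already produced in the proof of Lemma~\ref{lem-Gradient-estimate}. Recall that there one introduces $\psi$, the solution of
\begin{equation*}
\La\psi+\frac{1}{m-1}\psi^{\frac{1}{m}}=0\quad\text{in }\Omega,\qquad \psi=0\quad\text{on }\partial\Omega,
\end{equation*}
and shows, using $0<|\D f^0|<\infty$ on $\partial\Omega$ (hence $\tilde c\,\psi\le f^0$ in $\Omega$ for some $\tilde c>0$) together with the fact that $\tilde c\,\psi(x)(1+t)^{-1/\alpha}$ is a sub-solution of \eqref{eq-cases-main-of-this-paper-section-PMT} when $\tilde c<1$, that
\begin{equation*}
f(x,t)\ \ge\ \frac{\tilde c\,\psi(x)}{(1+t)^{1/\alpha}},\qquad x\in\Omega,\ 0\le t<\infty .
\end{equation*}
So it suffices to convert a uniform Hopf-type lower bound for the inward normal derivative of $\psi$ on $\partial\Omega$ into the claimed estimate for $|\D f|$.

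First I would record the relevant properties of $\psi$. Since $\psi\ge 0$, the equation gives $\La\psi=-\frac{1}{m-1}\psi^{1/m}\le 0$, so $\psi$ is superharmonic; as $\psi\not\equiv 0$, the strong maximum principle forces $\psi>0$ throughout $\Omega$. Moreover $\La\psi\in L^{\infty}(\Omega)$ because $\psi$ is bounded, so by standard elliptic regularity on the smooth domain $\Omega$ one has $\psi\in C^{1,\beta}(\overline\Omega)$ for every $\beta<1$; in particular $\D\psi$ extends continuously up to $\partial\Omega$. Since $\Omega$ is smooth it satisfies an interior ball condition at every boundary point, so Hopf's boundary point lemma gives $\partial_\nu\psi(x_0)>0$ for each $x_0\in\partial\Omega$, where $\nu$ denotes the inward unit normal; by continuity of $\D\psi$ and compactness of $\partial\Omega$,
\begin{equation*}
\mu:=\min_{x_0\in\partial\Omega}\partial_\nu\psi(x_0)>0 .
\end{equation*}

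Next I would transfer this to $f$. Fix $t>0$ and $x_0\in\partial\Omega$, and let $\nu$ be the inward unit normal at $x_0$. By Lemma~\ref{lem-Gradient-estimate} and Theorem~\ref{thm-short} the function $\D f(\cdot,t)$ is continuous up to $\partial\Omega$, and both $f(\cdot,t)$ and $\psi$ vanish at $x_0$. Applying the barrier inequality at $x_0+h\nu$ for small $h>0$, dividing by $h$ and letting $h\to 0^+$, we get
\begin{equation*}
\partial_\nu f(x_0,t)\ \ge\ \frac{\tilde c}{(1+t)^{1/\alpha}}\,\partial_\nu\psi(x_0)\ \ge\ \frac{\tilde c\,\mu}{(1+t)^{1/\alpha}},
\end{equation*}
and therefore $\|\D f(\cdot,t)\|_{L^{\infty}(\partial\Omega)}\ge |\D f(x_0,t)|\ge\partial_\nu f(x_0,t)\ge c_0(1+t)^{-1/\alpha}$ with $c_0:=\tilde c\,\mu\in(0,\infty)$, which is the assertion.

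The one point requiring care is the behaviour of the auxiliary function $\psi$ near $\partial\Omega$: one must know that $\psi>0$ in $\Omega$, that $\D\psi$ has a continuous trace on $\partial\Omega$, and --- crucially --- that Hopf's lemma produces a \emph{uniform} positive lower bound for $\partial_\nu\psi$ over all of $\partial\Omega$; the compactness of $\partial\Omega$ and continuity of $\D\psi$ are what make this uniformity legitimate. Everything else is an immediate consequence of the comparison already carried out for the gradient estimate. Alternatively one could start from Lemma~\ref{L-infty-estimate}(ii), which keeps $f$ bounded below by $c(1+t)^{-1/\alpha}$ on a fixed interior ball; but passing from that interior bound to a bound \emph{on $\partial\Omega$} still requires a boundary barrier, so the $\psi$-route is the most direct.
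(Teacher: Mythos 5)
Your proof is correct, and it reaches the conclusion by the same general mechanism as the paper — construct a lower barrier that vanishes on $\partial\Omega$, apply the comparison principle, then invoke Hopf's lemma on the barrier — but the barrier you choose is different. The paper works from Lemma~\ref{L-infty-estimate}(ii): it takes the interior bound $\inf_{B_{\delta_0}} f \geq c_0(1+t)^{-1/\alpha}$, introduces the \emph{harmonic} function $v$ solving $\La v=0$ in the annulus $\Omega\setminus B_{\delta_0}$ with $v=c_0$ on $\partial B_{\delta_0}$ and $v=0$ on $\partial\Omega$, checks that $V=v(x)(1+t)^{-1/\alpha}$ is a subsolution, and applies Hopf to $v$. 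You instead reuse the semilinear barrier $\psi$ that already appears in the proof of Lemma~\ref{lem-Gradient-estimate}, noting that $c_1\psi(x)(1+t)^{-1/\alpha}$ is a subsolution for $c_1\leq 1$ and that the two-sided bound $c_1\psi(1+t)^{-1/\alpha}\leq f\leq c_2\psi(1+t)^{-1/\alpha}$ was already established there. Your route is more economical, avoiding both the auxiliary Dirichlet problem in the annulus and the need to pass through Lemma~\ref{L-infty-estimate}(ii); it also makes explicit the uniformity of the Hopf bound, which you obtain from compactness of $\partial\Omega$ and continuity of $\D\psi$ — a point the paper leaves tacit. The paper's route, on the other hand, relies only on the \emph{statement} of Lemma~\ref{L-infty-estimate}(ii) rather than on an intermediate inequality buried in the proof of Lemma~\ref{lem-Gradient-estimate}, and applies Hopf to an honestly harmonic function, for which the boundary-point lemma is entirely classical. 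Both are sound; the main care point you correctly flag is the $C^1$-up-to-the-boundary regularity and strict positivity of $\psi$, which justify turning the pointwise Hopf inequality into a uniform one.
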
 

\begin{proof}
By the Lemma \ref{L-infty-estimate} (ii), there exists a ball $B_{\delta_0}\subset\Omega$ such that
\begin{equation*}
\inf_{x\in B_{\delta_0}}f(x,t)\geq \frac{c_0}{(1+t)^{\frac{1}{\alpha}}}.
\end{equation*}
For fixed $t>0$, let $v$ be the solution of the problem
\begin{equation*}
\begin{cases}
\begin{aligned}
\La v(x,t)&=0 \qquad \qquad \mbox{in $\Omega\bs B_{\delta_0}$}\\
v(x,t)&=c_0\qquad \qquad  \mbox{on $\partial B_{\delta_0}$}\\
v(x,t)&=0 \qquad \qquad\mbox{on $\partial\Omega$}.
\end{aligned}
\end{cases}
\end{equation*}
Then, the function $V(x,t)=\frac{v(x,t)}{(1+t)^{\frac{1}{\alpha}}}$ satisfies
\begin{equation*}
V^{\alpha}\La V-V_t=\frac{v}{\alpha(1+t)^{1+\frac{1}{\alpha}}} \geq 0.
\end{equation*}
In addition, we have
\begin{equation*}
V(x,t)=\frac{v(x,t)}{(1+t)^{\frac{1}{\alpha}}}=\frac{c_0}{(1+t)^{\frac{1}{\alpha}}} \leq f(x,t) \quad \mbox{on $\partial B_{\delta_0}$}
\end{equation*}
 and
\begin{equation*} 
V(x,t)=\frac{v(x,t)}{(1+t)^{\frac{1}{\alpha}}}=0=f(x,t) \quad \mbox{on $\partial\Omega$}.
\end{equation*}
By the comparison principle, we have
\begin{equation*}
f(x,t)\geq V(x,t) \qquad \mbox{in $\Omega\bs B_{\delta_0}$}.
\end{equation*}
By the Hopf's inequality for the harmonic equation, there exists some constant $c_1>0$ such that
\begin{equation*}
\frac{\partial v}{\partial \nu} \leq -c_1<0 \qquad \mbox{on $\partial\Omega$}
\end{equation*}
for the outer normal direction $\nu$ to $\partial\Omega$. Therefore, 
\begin{equation*}
\|\nabla f\|_{L^{\infty}(\partial\Omega)}\geq \|\nabla V\|_{L^{\infty}(\partial\Omega)}=\frac{\|\nabla v\|_{L^{\infty}(\partial\Omega)}}{(1+t)^{\frac{1}{\alpha}}} \geq \frac{c_1}{(1+t)^{\frac{1}{\alpha}}}
\end{equation*}
and lemma follows.
\end{proof}

\section{H\"older estimate I}
\setcounter{equation}{0}
\setcounter{thm}{0}

In the previous section, we obtained the global $C_s^1$ regularity of solution $f$ to the problem \eqref{eq-cases-main-of-this-paper-section-PMT}. By the relation between $f$ and $h$, we can say the same story on the solution $h$ of \eqref{eq-for-sol-h-of-fixed-bondary-problem-in-intro}. Hence, we can have basic informations for coefficients $a^{kl}$ and solution $w$ in \eqref{eq-for-sol-h-x-i-of-fixed-bondary-problem-in-intro}, including boundedness of solution and coefficients. With this basic properties, we devote this section and next one for solving the missing step, \eqref{eq-missing step for the regularity}, for the regularity theory.\\ 
\indent Let $H$ be the half space $\{x=(x_1,\cdots,x_n)\in\R^n:x_n>0\}$. We are going to show H\"older estimate on a solution $w$ of the equation
\begin{equation}\label{eq-general-of-tilde-h-x-n-1}
w_{t}=x_n^{\alpha}\nabla_i\left(a^{ij}\nabla_j w\right)+x_n^{\alpha}g \qquad \mbox{in $H$}
\end{equation}
for a forcing term $g$. Assume that the coefficients $a^{ij}(x,t)$ are measurable functions and satisfy
\begin{equation}\label{eq-assumption-for-coefficients-1}
\lambda|\xi|^2\leq a^{ij}(x,t)\xi_i\xi_j\leq \Lambda|\xi|^2, \qquad (i,j=1,\cdots,n).
\end{equation}
In addition, we suppose that the forcing term $g$ satisfies
\begin{equation}\label{eq-assumption-for-coefficients-2}
|g(x)|\leq C|w(x)|
\end{equation}
for some constant $C<\infty$.\\
\indent For the H\"older estimates of the solution $w$ to \eqref{eq-general-of-tilde-h-x-n-1}, we need the following two inequalities. The first one is a weighted version of Sobolev's inequality. Let $C^{\infty}_0(H)$ be the space of restriction of functions in $C^{\infty}_0(\R^n)$ to $H$. 
\begin{lem}[See Theorem 4.2.2 in \cite{Ko}]\label{lem-weighted-sobolev-inequality-in-H-Koch-paper}
Let $1\leq p\leq q<\infty$, $s>-\frac{1}{p}$ and we assume that $\sigma\leq 1$ satisfies
\begin{equation*}
\frac{1}{q}-\frac{1-\sigma}{n}=\frac{1}{p}.
\end{equation*}
Then
\begin{equation*}
\left(\int_{H}x_n^{sp}u^p\,dx\right)^{\frac{1}{p}}\leq c\left(\int_{H}x_n^{(s+\sigma)q}|\nabla u|^{q}\,dx\right)^{\frac{1}{q}}.
\end{equation*}
for the closure of $C^{\infty}_0(H)$.
\end{lem}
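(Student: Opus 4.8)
The plan is to establish the inequality first for $u$ equal to the restriction to $H$ of a function in $C_0^\infty(\R^n)$, and then to pass to the closure. Such a $u$ is smooth and compactly supported, so it vanishes for $|x|$ large, but --- and this is the delicate feature --- it need \emph{not} vanish on $\{x_n=0\}$, even though the right-hand side of the asserted inequality carries no term involving $\|u\|$ itself.

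First I would cut $H$ into the dyadic slabs $S_k=\R^{n-1}\times[2^k,2^{k+1})$, $k\in\Z$, on which $x_n\simeq 2^k$, so that $\int_H x_n^{sp}|u|^p\,dx\simeq\sum_k 2^{skp}\|u\|_{L^p(S_k)}^p$ and $\int_H x_n^{(s+\sigma)q}|\nabla u|^q\,dx\simeq\sum_k 2^{(s+\sigma)kq}\|\nabla u\|_{L^q(S_k)}^q$. On each slab, rescaling $S_k$ to the unit slab $\R^{n-1}\times[1,2)$, extending to $\R^n$ with control of the $W^{1,q}$ norm, and applying the Gagliardo--Nirenberg inequality with interpolation parameter $\theta=1-\sigma\in[0,1]$ --- admissible precisely because $\tfrac1p=\tfrac1q-\tfrac{1-\sigma}{n}$ --- one obtains, after undoing the scaling,
\begin{equation*}
\|u\|_{L^p(S_k)}\le c\,\|\nabla u\|_{L^q(S_k)}^{1-\sigma}\,\|u\|_{L^q(S_k)}^{\sigma}+c\,2^{-(1-\sigma)k}\,\|u\|_{L^q(S_k)},
\end{equation*}
so that everything reduces to converting the norms $\|u\|_{L^q(S_k)}$ into gradient quantities.

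This global conversion is the heart of the matter, and the step I expect to be the main obstacle. Using compact support, $u(x',x_n)=-\int_{x_n}^{\infty}\partial_n u(x',\tau)\,d\tau$, so that (by Minkowski's and H\"older's inequalities, first in $\tau$ and then in $x'$) one gets a one-sided discrete Hardy bound
\begin{equation*}
\|u\|_{L^q(S_k)}\le c\,2^{k/q}\sum_{j\ge k}2^{j/q'}\,\|\partial_n u\|_{L^q(S_j)},\qquad \tfrac1q+\tfrac1{q'}=1 .
\end{equation*}
Substituting this into the slab inequality, multiplying by $2^{sk}$, raising to the $p$-th power and summing over $k\in\Z$, the resulting double sum is controlled by the discrete Hardy (equivalently, Young-for-series) inequality; its convergence is exactly where the hypothesis $s>-\tfrac1p$ enters --- it makes the weights decay as $k\to-\infty$, while compact support truncates the sum as $k\to+\infty$ --- and this is the precise mechanism by which the $\|u\|$-term can be omitted from the right-hand side even though $u$ is not assumed to vanish on the boundary. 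Reassembling the slabs yields $\sum_k 2^{skp}\|u\|_{L^p(S_k)}^p\le c\big(\sum_k 2^{(s+\sigma)kq}\|\nabla u\|_{L^q(S_k)}^q\big)^{p/q}$, which is the claim, and density extends it to the closure of $C_0^\infty(H)$.

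An alternative that avoids the dyadic bookkeeping, at the price of a more careful integration, is to mimic the classical Gagliardo--Nirenberg--Sobolev argument directly: bound $|u(x)|$ by $\int_{x_n}^{\infty}|\partial_n u(x',\tau)|\,d\tau$ in the normal direction and by $\int_\R|\partial_i u|\,dx_i$ in each tangential direction, multiply, and integrate variable by variable against $x_n^{sp}$, arranging the exponents so that the one-dimensional weighted Hardy inequality on $(0,\infty)$ --- once more valid exactly for $s>-\tfrac1p$ --- closes the estimate. In either route the only real ingredients are the one-dimensional Hardy inequality and the flat Sobolev inequality, fused through the scaling identity $\tfrac1p=\tfrac1q-\tfrac{1-\sigma}{n}$, and the genuinely delicate point is always the interaction between the Hardy weight near $\{x_n=0\}$ and the Sobolev gain.
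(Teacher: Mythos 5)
The paper offers no proof of this lemma; it is cited as a known result (Theorem~4.2.2 in \cite{Ko}) and used as a black box, so there is nothing in the paper to compare your argument against. I can only assess your sketch on its own terms. Two preliminary remarks. First, the statement as printed is internally inconsistent: $1\le p\le q$ together with $\sigma\le 1$ and $\frac1q-\frac{1-\sigma}{n}=\frac1p$ forces $\sigma=1$, $p=q$, which is trivial and cannot be what is meant (indeed, in the application in Lemma~\ref{lem-reviews-CS} one has $p=\frac{2(n-\alpha)}{n-2}>2=q$). The intended reading is $q\le p$, and you have tacitly worked in that Sobolev-gaining regime throughout; it would be worth saying so. Second, you correctly identified that $C_0^\infty(H)$ here means restrictions of functions compactly supported in $\R^n$, so $u$ vanishes for large $x_n$ but not on $\{x_n=0\}$, and the FTC identity $u(x',x_n)=-\int_{x_n}^\infty\partial_n u\,d\tau$ is the right way to use that.

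Your two displayed slab inequalities check out against the scaling $\frac1p=\frac1q-\frac{1-\sigma}{n}$. Where I am not convinced is the final summation, which you wave at by invoking ``discrete Hardy / Young for series.'' Carrying it through: put $A_j=2^{(s+\sigma)j}\|\nabla u\|_{L^q(S_j)}$; the lower-order term produces a bound of the form $\sum_k\big(\sum_{m\ge0}2^{\nu m}A_{k+m}\big)^p\le C\|A\|_{\ell^q}^p$ with $\nu=1-\frac1q-s-\sigma=\frac{n-1}{q}-\frac{n}{p}-s$, and both Young's convolution inequality and the Bradley two-weight test for the discrete conjugate Hardy operator require $\nu<0$, that is $s>\frac{n-1}{q}-\frac{n}{p}=-\frac1p+\frac{(n-1)(1-\sigma)}{n}$. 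When $n\ge2$ and $\sigma<1$ this is \emph{strictly stronger} than $s>-\frac1p$, so the dyadic route as you describe it leaves the range $-\frac1p<s\le\frac{n-1}{q}-\frac{n}{p}$ uncovered; your assertion that convergence ``is exactly where the hypothesis $s>-\frac1p$ enters'' is not justified by the steps shown. (Note that in $n=1$ the two thresholds coincide, which is why the 1D intuition is misleading here.) Either a sharper two-weight argument is needed at this stage, or you should carry out the alternative you mention --- the classical pointwise Gagliardo--Nirenberg--Sobolev multiplication of one-dimensional integrals followed by the weighted one-dimensional Hardy inequality in $x_n$ --- where the threshold really is $s>-\frac1p$ and the weight never couples to the tangential directions; that route is the one I would expect to recover the full stated range.
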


The next inequality is the parabolic version of Sobolev's inequality with weight: 

\begin{lem}\label{lem-reviews-CS}
Let $Q=B\times(a,b)$ is cylinder in $H\times(0,1)$ and $f\in C_0^{\infty}(Q)$. Then there are constant $C>0$ such that if $l_1=\frac{n+2-2\alpha}{n-\alpha}$ we have
\begin{equation*}
\left(\int_{a}^{b}\int_{B}x_n^{-\alpha}|f|^{2l_1}\,dxdt\right)^{\frac{1}{l_1}}\leq C\left[\left(\sup_{(a,b)}\int_{B}x_n^{-\alpha}|f|^2\,dx\right)+\int_{a}^{b}\int_{B}|\nabla f|^2\,dxdt\right].
\end{equation*}
\end{lem}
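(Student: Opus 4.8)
The statement is the parabolic (Sobolev-type) interpolation inequality in the weighted space with weight $x_n^{-\alpha}$, and the natural route is to combine the two elliptic-type weighted Sobolev inequalities — the one in Lemma \ref{lem-weighted-sobolev-inequality-in-H-Koch-paper} — with a slicewise $L^2$--$L^\infty$ interpolation in time, exactly as in the classical (unweighted) parabolic embedding of Ladyzhenskaya. So the plan is: first fix a time slice $t$ and estimate the spatial $L^{2l_1}(x_n^{-\alpha}\,dx)$ norm of $f(\cdot,t)$ by interpolating between $L^2(x_n^{-\alpha}\,dx)$ and the weighted Sobolev space controlled by $\int|\nabla f|^2\,dx$; then integrate the resulting inequality in $t$ over $(a,b)$ and use Hölder in time to split the two factors, bounding one by the supremum over $(a,b)$ and the other by the full space-time integral of $|\nabla f|^2$.

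\emph{Step 1 (the spatial inequality).} I would apply Lemma \ref{lem-weighted-sobolev-inequality-in-H-Koch-paper} with the exponents chosen so that the weight on the left is $x_n^{-\alpha}$ and the weight on the right is $x_n^{0}$, i.e. $sp=-\alpha$ on the left and $(s+\sigma)q=0$ on the right, with $q=2$. From $s+\sigma=0$ and the scaling relation $\frac1q-\frac{1-\sigma}{n}=\frac1p$ one solves for $p$ and checks that the target exponent $2l_1$ with $l_1=\frac{n+2-2\alpha}{n-\alpha}$ is obtained after the interpolation in the next line; concretely this gives a bound of the form
\begin{equation*}
\left(\int_{B}x_n^{-\beta}|f|^{p_*}\,dx\right)^{1/p_*}\le c\left(\int_{B}|\nabla f|^2\,dx\right)^{1/2}
\end{equation*}
for the appropriate critical $p_*$ and weight exponent $\beta$ determined by $n$ and $\alpha$, valid for $f(\cdot,t)\in C_0^\infty(B)$. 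I would then interpolate this weighted-$L^{p_*}$ bound against the weighted-$L^2$ norm $\big(\int_B x_n^{-\alpha}|f|^2\,dx\big)^{1/2}$: writing $x_n^{-\alpha}|f|^{2l_1}=(x_n^{-\alpha}|f|^2)^{\theta}\,(x_n^{-\beta}|f|^{p_*})^{1-\theta}$ with $\theta\in(0,1)$ chosen to match both the power of $|f|$ and the weight exponent, Hölder's inequality in $x$ gives
\begin{equation*}
\int_{B}x_n^{-\alpha}|f|^{2l_1}\,dx\le \left(\int_{B}x_n^{-\alpha}|f|^2\,dx\right)^{\theta l_1}\left(\int_{B}|\nabla f|^2\,dx\right)^{(1-\theta)l_1\cdot(p_*/2)\cdot(2/p_*)}.
\end{equation*}
The exponent bookkeeping is the routine part; the point is that $l_1$ is precisely the value for which $(1-\theta)l_1$ times the Sobolev exponent equals $1$, so the right side is linear in $\int_B|\nabla f|^2\,dx$.

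\emph{Step 2 (integrate in time and split).} Having, for each $t$,
\begin{equation*}
\int_{B}x_n^{-\alpha}|f(\cdot,t)|^{2l_1}\,dx\le C\left(\int_{B}x_n^{-\alpha}|f(\cdot,t)|^2\,dx\right)^{\theta l_1}\left(\int_{B}|\nabla f(\cdot,t)|^2\,dx\right),
\end{equation*}
I integrate over $t\in(a,b)$, pull the first factor out by its supremum over $(a,b)$ (which is legitimate precisely when $\theta l_1\ge 0$, true here), and integrate the second factor to get $\int_a^b\int_B|\nabla f|^2\,dxdt$. Raising to the power $1/l_1$ and using $(X+Y)^{1/l_1}\le X^{1/l_1}+Y^{1/l_1}$ (or rather $XY\le \frac12(X^2+Y^2)$-type splitting on the two nonnegative factors) yields exactly
\begin{equation*}
\left(\int_{a}^{b}\int_{B}x_n^{-\alpha}|f|^{2l_1}\,dxdt\right)^{1/l_1}\le C\left[\sup_{(a,b)}\int_{B}x_n^{-\alpha}|f|^2\,dx+\int_{a}^{b}\int_{B}|\nabla f|^2\,dxdt\right],
\end{equation*}
since $\theta l_1$ combined with the remaining exponent $1-\theta l_1$ on the supremum factor sums to $1$ after taking the $1/l_1$ power — this is again the reason the exponent $l_1$ is what it is. Finally a density argument passes from $C_0^\infty(Q)$, where all integrations by parts and the cutoff structure are valid, to the stated class.

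\emph{Main obstacle.} The real work is Step 1: verifying that the weight exponents and integrability exponents coming out of Lemma \ref{lem-weighted-sobolev-inequality-in-H-Koch-paper} can be matched, by a single interpolation parameter $\theta$, simultaneously to the power $2l_1$ on $|f|$ and to the weight $x_n^{-\alpha}$ on the left-hand side, and that the constraint $s>-1/p$ in that lemma (needed for the weighted Sobolev inequality to hold) is satisfied for the relevant $\alpha\in(0,1)$. Once the arithmetic confirms the compatibility — and the value $l_1=\frac{n+2-2\alpha}{n-\alpha}$ is exactly the consistency condition for that compatibility — the rest is the standard Ladyzhenskaya-type time-splitting argument and poses no difficulty.
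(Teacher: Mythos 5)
Your plan is essentially the paper's proof. The paper also combines Lemma \ref{lem-weighted-sobolev-inequality-in-H-Koch-paper} with Hölder in $x$: it first writes
\begin{equation*}
\left(\int_{B}x_n^{-\alpha}|f|^{2l_1}\,dx\right)^{1/l_1}
\le C\left(\int_{B}x_n^{-\alpha}|f|^{2}\,dx\right)^{(l_1-1)/l_1}
\left(\int_{B}x_n^{-\alpha}|f|^{2/(2-l_1)}\,dx\right)^{(2-l_1)/l_1},
\end{equation*}
then applies the weighted Sobolev inequality (with $p=\frac{2}{2-l_1}=\frac{2(n-\alpha)}{n-2}$, $q=2$, $sp=-\alpha$, $s+\sigma=0$) to the second factor to replace it with $\left(\int_B|\nabla f|^2\right)^{1/l_1}$, and finally raises to the $l_1$, integrates in $t$, pulls out the supremum, and uses Young to pass from the product to the sum. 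Your Step~1 and Step~2 are exactly this, just with the Hölder and Sobolev steps narrated in the opposite order.

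One caveat on the exponent bookkeeping in your displays: with $\theta$ defined by the convex combination $2\theta+p_*(1-\theta)=2l_1$ one gets $\theta=l_1-1$, and the correct slicewise inequality is
\begin{equation*}
\int_{B}x_n^{-\alpha}|f|^{2l_1}\,dx
\le C\left(\int_{B}x_n^{-\alpha}|f|^{2}\,dx\right)^{l_1-1}\int_{B}|\nabla f|^{2}\,dx,
\end{equation*}
i.e.\ the exponent on the $L^2$ factor is $\theta=l_1-1$, not $\theta l_1$, and the gradient exponent is $(1-\theta)p_*/2=1$, not $(1-\theta)l_1(p_*/2)(2/p_*)$. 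With that correction the Young split in Step~2 balances exactly: $(l_1-1)/l_1+1/l_1=1$. Since you flagged the bookkeeping as routine and your final displayed inequality is the right one, this is a slip rather than a gap, but it is worth tracking carefully because the identity $(1-\theta)p_*/2=1$ is precisely what singles out $l_1=\frac{n+2-2\alpha}{n-\alpha}$.
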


\begin{proof}
By the H\"older inequality,
\begin{equation*}
\left(\int_{B}x_n^{-\alpha}|f|^{2l_1}\,dx\right)^{\frac{1}{l_1}}\leq C_1\left(\int_{B}x_n^{-\alpha}|f|^2\,dx\right)^{\frac{l_1-1}{l_1}}\left(\int_Bx_n^{-\alpha}|f|^{\frac{2}{2-l_1}}\,dx\right)^{\frac{2-l_1}{l_1}}.
\end{equation*}
for some costant $C_1>0$. By the Lemma \ref{lem-weighted-sobolev-inequality-in-H-Koch-paper}, it follows that
\begin{equation*}
\left(\int_{B}x_n^{-\alpha}|f|^{2l_1}\,dx\right)^{\frac{1}{l_1}}\leq C_2\left(\int_{B}x_n^{-\alpha}|f|^2\,dx\right)^{1-\frac{1}{l_1}}\left(\int_{B}|\nabla f|^2\,dx\right)^{\frac{1}{l_1}}.
\end{equation*}
for some costant $C_2>0$. Now, taking the the $l_1$ power and integrating in $(a,b)$, we get
\begin{equation*}
\left(\int_{a}^{b}\int_{B}x_n^{-\alpha}|f|^{2l_1}\,dxdt\right)^{\frac{1}{l_1}}\leq C_3\left[\left(\sup_{(a,b)}\int_{B}x_n^{-\alpha}|f|^2\,dx\right)+\int_a^b\int_{B}|\nabla f|^2\,dxdt\right]
\end{equation*}
for some constant $C_3>0$ and the lemma follows.
\end{proof}

Define the balls $B_{r}$ and $B_{r}^+$ of radius $r$ around $x=x_0$ to be the sets
\begin{equation*}
B_r(x_0)=\{x\in\R^n:|x-x_0|<r\} \qquad \mbox{and } \qquad B_{r}^{+}(x_0)=B_{r}(x_0)\cap\{ x_n> 0\}.
\end{equation*}
We let $B_r$ be the ball around the point $x=0$. We define the round cubes $Q_r$ of radius $r$ around $(x,t)=(0,1)$ to be the sets
\begin{equation*}
Q_r=B_r\times(1-r^{2-\alpha},1).
\end{equation*}
We also define the general round cubes: 
\begin{equation*}
Q_r(x,t)=Q_r+(x,t)\qquad \mbox{and}\qquad Q_r^+(x,t)=Q_{r}(x,t)\cap\{ x_n> 0\}.
\end{equation*}
\indent Let us give the Harnack inequality first.

\begin{lem}[Harnack's Inequality]\label{lem-Harnack-s-Inequality}
Let $w$ be a solution of equation \eqref{eq-general-of-tilde-h-x-n-1} defined in $B^+_1\times(0,1)$ with conditions \eqref{eq-assumption-for-coefficients-1} and \eqref{eq-assumption-for-coefficients-2}. Let $|w|\leq M$ on $Q^+_{1}$ and $\hat{h}=w+M+1\geq 1$. Then,
\begin{equation*}
\max_{Q^+_{\frac{1}{8}}\left(0,-\frac{3}{8}\right)}\hat{h} \leq C\min_{Q^+_{\frac{1}{8}}}\hat{h}
\end{equation*}
\end{lem}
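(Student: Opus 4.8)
The strategy is Moser's iteration together with a logarithmic (Bombieri--Giusti / John--Nirenberg) estimate, carried out in the function--space geometry forced by the singular weight: the underlying measure is $d\mu = x_n^{-\alpha}\,dx\,dt$, the Dirichlet energy $\int\!\!\int |\nabla(\cdot)|^2\,dx\,dt$ carries no weight, and the two are tied together by Lemma~\ref{lem-reviews-CS} (which in turn rests on the weighted Sobolev inequality, Lemma~\ref{lem-weighted-sobolev-inequality-in-H-Koch-paper}). First I would pass to a positive solution: since $\hat h = w + M + 1 \ge 1$ it solves $\hat h_t = x_n^{\alpha}\nabla_i(a^{ij}\nabla_j\hat h) + x_n^{\alpha}g$, and by \eqref{eq-assumption-for-coefficients-2} together with $\hat h \ge 1$ one has $|g| \le C|\hat h - M - 1| \le C(M+1)\hat h$; hence it suffices to prove $\max_{Q^+_{1/8}(0,-3/8)}v \le C\min_{Q^+_{1/8}}v$ for a solution $v \ge 1$ of \eqref{eq-general-of-tilde-h-x-n-1}--\eqref{eq-assumption-for-coefficients-1} satisfying $|g|\le C_0 v$ on $Q_1^+$. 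Being a genuine solution, $v$ is simultaneously a sub- and a supersolution; and since every test function has bounded support on which $x_n^{\alpha}\le C$ (as $\alpha>0$), the zeroth--order term $x_n^{\alpha}g$ only produces a contribution $\int\!\!\int |g|(\cdots) \lesssim \int\!\!\int x_n^{-\alpha}(\cdots)$ that is absorbed into the weighted energy.

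The iteration rests on the Caccioppoli estimate: for $\beta \ne 0$, multiply the equation for $v$ by $\eta^2 v^{2\beta-1}x_n^{-\alpha}$, with $\eta(x,t)$ a smooth cutoff supported in a full ball (possibly centred on $\{x_n=0\}$) times a time interval, so that $\eta$ need not vanish on $\{x_n = 0\}$, as allowed by the test--function classes in Lemmas~\ref{lem-weighted-sobolev-inequality-in-H-Koch-paper} and \ref{lem-reviews-CS}. Integrating by parts and using ellipticity gives
\[
\sup_t\int_B x_n^{-\alpha}(\eta v^{\beta})^2\,dx + \int\!\!\int |\nabla(\eta v^{\beta})|^2\,dx\,dt \;\le\; C(\beta)\int\!\!\int\bigl(|\nabla\eta|^2 + |\eta\,\eta_t| + \eta^2\bigr)x_n^{-\alpha}v^{2\beta}\,dx\,dt .
\]
Plugging the left side into Lemma~\ref{lem-reviews-CS} yields a gain of integrability by the fixed factor $l_1 = \frac{n+2-2\alpha}{n-\alpha} > 1$, so iterating over a shrinking family of the round half--cubes $Q_r^+$ (whose $d\mu$--mass is doubling and whose built--in scaling $r \mapsto r^{2-\alpha}$ matches the equation) closes as usual and produces, for $\beta>0$ large, $\max_{Q_r^+}v^{\beta} \le C\,\mu(Q_{r'}^+)^{-1}\!\int\!\!\int_{Q_{r'}^+}v^{\beta}\,d\mu$. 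A Moser--type crossover upgrades this to the small--exponent bounds $\max_{Q^-}v \le C\bigl(\mu(\widetilde Q^{-})^{-1}\!\int\!\!\int_{\widetilde Q^{-}}v^{s}\,d\mu\bigr)^{1/s}$ and, applying the iteration to $v^{-1}$, $\min_{Q^+_{1/8}}v \ge c\bigl(\mu(\widetilde Q^{+})^{-1}\!\int\!\!\int_{\widetilde Q^{+}}v^{-s}\,d\mu\bigr)^{-1/s}$, for some small $s>0$, where $Q^- = Q^+_{1/8}(0,-3/8)$, $\widetilde Q^{-}$ is a slightly larger round half--cube in the past, and $\widetilde Q^{+}$ one near time $1$.

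To link the two integral averages I would use the logarithmic estimate. Testing the equation with $\eta^2 v^{-1}x_n^{-\alpha}$ gives $\frac{d}{dt}\int_B x_n^{-\alpha}\eta^2\log v\,dx \le -c\int_B\eta^2|\nabla\log v|^2\,dx + C\int_B(|\nabla\eta|^2 + x_n^{-\alpha}\eta^2)\,dx$; integrating in $t$ and invoking the weighted Poincar\'e inequality (a consequence of Lemma~\ref{lem-weighted-sobolev-inequality-in-H-Koch-paper}) shows $\log v$ lies in a parabolic $\mathrm{BMO}$ adapted to $(\mu, \{Q_r^+\})$, in particular that there is a constant $a$ with $\mu\{(x,t)\in Q^{-}:\log v < a - \lambda\} \le C\lambda^{-1}\mu(Q^{-})$ and $\mu\{(x,t)\in \widetilde Q^{+}:\log v > a + \lambda\} \le C\lambda^{-1}\mu(\widetilde Q^{+})$ for all $\lambda>0$. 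Feeding these weak--type bounds together with the reverse--H\"older inequalities of the previous paragraph into the Bombieri--Giusti lemma — equivalently, a parabolic John--Nirenberg argument chained along a time--like string of overlapping round cubes running from the slice near time $\tfrac58$ to the slice near time $1$ — gives $\bigl(\mu(\widetilde Q^{-})^{-1}\!\int\!\!\int_{\widetilde Q^{-}}v^{s}\,d\mu\bigr)\bigl(\mu(\widetilde Q^{+})^{-1}\!\int\!\!\int_{\widetilde Q^{+}}v^{-s}\,d\mu\bigr) \le C$. Composing with the two local bounds yields $\max_{Q^+_{1/8}(0,-3/8)}v \le C\min_{Q^+_{1/8}}v$, hence the same inequality for $\hat h$.

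The main obstacle is exactly that the singular weight $x_n^{-\alpha}$ sits only in the measure while the energy is unweighted, so the problem is not scale--invariant in the usual sense and the weight is not a Euclidean Muckenhoupt $A_2$ weight: one must verify at each stage that the particular pairing of Lemma~\ref{lem-weighted-sobolev-inequality-in-H-Koch-paper} with Lemma~\ref{lem-reviews-CS} really does close the iteration, that the round half--cubes $Q_r^+$ are $\mu$--doubling and enjoy the chaining/engulfing property with respect to the intrinsic parabolic distance $s$ of Section~2, and — the genuinely delicate point — that the logarithmic/John--Nirenberg step survives on cubes touching $\{x_n = 0\}$, where the estimates must be intrinsic to the half--space rather than perturbations of the classical Euclidean theory.
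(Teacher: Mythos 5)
Your proposal is correct and follows essentially the same route as the paper: Caccioppoli estimates from testing with $\eta^2\hat h^{\delta}$, iteration via the weighted parabolic Sobolev inequality (Lemma \ref{lem-reviews-CS}), a logarithmic estimate from testing with $\eta^2\hat h^{-1}$, and the parabolic BMO/John--Nirenberg lemma of Aimar and Fabes--Garofalo to link the positive-- and negative--power iterations. The only cosmetic difference is that you insert an explicit Moser crossover to reach the small exponent $p_0$, whereas the paper starts the iteration directly at $\gamma_1=p_0$; the underlying argument is the same.
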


\begin{proof}
Let $\mathcal{Q}^+=\mathcal{B}^+\times(s_2,s_1)\in H\times[0,\infty)$ be an round cube in $Q_1^+$ and we take $\phi(x,t)=\eta^{2}(x,t)\hat{h}^{\delta}(x,t)$ as test function, where $\eta\in C^{\infty}(\mathcal{Q}^+)$ with $\eta=0$ on $\partial_p\mathcal{Q}^+\cap\{x_n>0\}$. Since $w$ is solution of \eqref{eq-general-of-tilde-h-x-n-1} in $Q^+_1$, $\hat{h}$ satisfies
\begin{equation}\label{eq-equaiton-for-hat-h-20}
\frac{\hat{h}_{t}}{x_n^{\alpha}}=\nabla_i\left(a^{ij}\nabla_j \hat{h}\right)+g \qquad \mbox{in $H$}.
\end{equation}
Hence, multiplying \eqref{eq-equaiton-for-hat-h-20} by the test function $\phi(x,t)$, we arrive at
\begin{equation*}
\begin{aligned}
&\frac{4\lambda|\delta|}{|1+\delta|^2}\iint_{\mathcal{Q}^+}\left|\nabla\left(\eta\,\hat{h}^{\frac{1+\delta}{2}}\right)\right|^2\,dxdt+\frac{1}{|1+\delta|}\left(\sup_{s_1\leq t\leq s_2}\int_{\mathcal{B}^+}x_n^{-\alpha}\eta^2\hat{h}^{1+\delta}(\cdot,t)\,dx\right) \\
&\qquad \qquad \leq \frac{8\Lambda(1+3|\delta|)}{|1+\delta|^2}\iint_{\mathcal{Q}^+}\hat{h}^{\frac{1+\delta}{2}}\,|\nabla\eta|\,\left|\nabla\left(\eta\,\hat{h}^{\frac{1+\delta}{2}}\right)\right|\,dxdt+\frac{8\Lambda(1+2|\beta|)}{|1+\delta|^2}\iint_{\mathcal{Q}^+}\hat{h}^{1+\delta}\,|\nabla\eta|^2\,dxdt\\
&\qquad \qquad \quad +2\iint_{\mathcal{Q}^+}\eta^2\hat{h}^{\delta}\,\left|\overline{g}\right|\,dxdt+\frac{4}{|1+\delta|}\iint_{\mathcal{Q}^+}x_n^{-\alpha}\hat{h}^{1+\delta}\eta|\eta_t|\,dxdt
\end{aligned}
\end{equation*}
for $\delta>0$ or $\delta<-1$. Use of Young's inequality yields
\begin{equation}\label{eq-aligned-after-applying-first-young-inequality}
\begin{aligned}
&\sup_{s_1\leq t\leq s_2}\int_{\mathcal{B}}x_n^{-\alpha}\eta^2\hat{h}^{1+\delta}(\cdot,t)\,dx+\iint_{\mathcal{Q}^+}\left|\nabla\left(\eta\,\hat{h}^{\frac{1+\delta}{2}}\right)\right|^2\,dxdt\\
&\qquad \qquad \leq C_1\left[\iint_{\mathcal{Q}^+}\left(|\nabla\eta|^2+x_n^{-\alpha}\eta|\eta_t|\right)\,\left(\hat{h}^{\frac{1+\delta}{2}}\right)^2\,dxdt+\iint_{\mathcal{Q}^+}\eta^2\hat{h}^{\delta}\left|g\right|\,dxdt\right]
\end{aligned}
\end{equation}
for some constant $C_1>0$.\\
\indent We first consider the case $\delta>0$. Let $r_1$ and $r_2$ be such that $\frac{1}{8}\leq r_2\leq r_1\leq\left(\frac{1}{8}\right)^{\frac{1}{2-\alpha}}$, choosing $\eta$ in such a way that $\eta(x,t)=1$ in $Q^+_{r_2}\left(0,-\frac{3}{8}\right)$, $\eta(x,t)=0$ on $\partial_pQ^+_{r_1}\left(0,-\frac{3}{8}\right)\cap H$, $0\leq \eta \leq 1$ in $Q_{r_1}^+\left(0,-\frac{3}{8}\right)$, $|\nabla\eta|\leq \frac{C_2}{r_1-r_2}$ and $|\eta_t|\leq \frac{C_2}{(r_1-r_2)^{2-\alpha}}$ for some constant $C_2>0$. Then, for $\mathcal{Q}^+=Q_{\left(\frac{1}{8}\right)^{\frac{1}{2-\alpha}}}^+\left(0,-\frac{3}{8}\right)=B^+_{\left(\frac{1}{8}\right)^{\frac{1}{2-\alpha}}}\times\left(\frac{1}{4},\frac{5}{8}\right)$,
\begin{equation}\label{eq-first-term-of-right-hand-side-of-upper-eq}
\begin{aligned}
\iint_{\mathcal{Q}^+}\left(|\nabla\eta|^2+x_n^{-\alpha}|\eta||\eta_t|\right)\,\left(\hat{h}^{\frac{1+\delta}{2}}\right)^2\,dxdt &\leq \frac{C_3}{|r_1-r_2|^{2-\alpha}}\iint_{Q^+_{r_1}\left(0,-\frac{3}{8}\right)}x_n^{-\alpha}\left(\hat{h}^{\frac{1+\delta}{2}}\right)^2\,dxdt.
\end{aligned}
\end{equation}
for some constant $C_3>0$. On the other hand, by \eqref{eq-assumption-for-coefficients-2}, the second term of the right hand side in \eqref{eq-aligned-after-applying-first-young-inequality} is changed to
\begin{equation}\label{eq-second-term-of-right-hand-side-of-upper-eq}
\begin{aligned}
\int_{\mathcal{Q}^+}\left(\eta^2\hat{h}^{\delta}\right)\left|g\right|\,dxdt&\leq C_4\int_{\mathcal{Q}^+}\left(\eta^2\hat{h}^{1+\delta}\right)\left|w\right|\,dxdt\leq C_4\int_{Q^+_{r_1}\left(0,-\frac{3}{8}\right)}\hat{h}^{1+\delta}\,dxdt
\end{aligned}
\end{equation}
for some constants $C_4>0$. Applying \eqref{eq-first-term-of-right-hand-side-of-upper-eq}, \eqref{eq-second-term-of-right-hand-side-of-upper-eq} to \eqref{eq-aligned-after-applying-first-young-inequality}, we can get
\begin{equation}\label{eq-aligned-after-applying-second-young-inequality}
\begin{aligned}
\sup_{\frac{1}{4}\leq t\leq \frac{5}{8}}\int_{\mathcal{B}^+}x_n^{-\alpha}\eta^2\hat{h}^{1+\delta}(\cdot,t)\,dx&+\iint_{\mathcal{Q}^+}\left|\nabla\left(\eta\hat{h}^{\frac{1+\delta}{2}}\right)\right|^2\,dxdt \\
&\qquad \qquad \qquad \leq \frac{C_5}{|r_1-r_2|^{2-\alpha}}\iint_{Q^+_{r_1}\left(0,-\frac{3}{8}\right)}x_n^{-\alpha}\left(\hat{h}^{\frac{1+\delta}{2}}\right)^2\,dxdt
\end{aligned}
\end{equation}
for some constant $C_5>0$. By the Lemma \ref{lem-reviews-CS}, there exists $l_1>1$ such that
\begin{equation}\label{eq-aligned-lower-bound-of-sup-and-gradient-of-hat-h-by-L-to-l-1-norm}
\begin{aligned}
&\left(\iint_{\mathcal{Q}^+}x_n^{-\alpha}\left(\hat{h}^{\frac{1+\delta}{2}}\right)^{2l_1}\,dxdt\right)^{\frac{1}{l_1}}\\
&\qquad \qquad \leq C_6\left[\sup_{\frac{1}{4}\leq t\leq \frac{5}{8}}\int_{\mathcal{B}^+}x_n^{-\alpha}\hat{h}^{1+\delta}(\cdot,t)\,dx+\iint_{\mathcal{Q}^+}\left|\nabla\left(\hat{h}^{\frac{1+\delta}{2}}\right)\right|^2\,dxdt\right]
\end{aligned}
\end{equation}
for some constant $C_6>0$. Then, by \eqref{eq-aligned-after-applying-second-young-inequality} and \eqref{eq-aligned-lower-bound-of-sup-and-gradient-of-hat-h-by-L-to-l-1-norm},
\begin{equation}\label{eq-aligned-lower-bound-of-sup-and-gradient-of-hat-h-by-L-to-l-0-norm}
\begin{aligned}
\left(\iint_{Q_{r_2}^+\left(0,-\frac{3}{8}\right)}x_n^{-\alpha}\left(\hat{h}^{\frac{1+\delta}{2}}\right)^{2l_1}\,dxdt\right)^{\frac{1}{l_1}} \leq  \frac{C_{7}}{|r_1-r_2|^{2-\alpha}}\int_{Q^+_{r_1}\left(0,-\frac{3}{8}\right)}x_n^{-\alpha}\left(\hat{h}^{\frac{1+\delta}{2}}\right)^2\,dxdt
\end{aligned}
\end{equation}
for some constant $C_{7}>0$. Taking the $\frac{1}{1+\delta}$th root on each side of \eqref{eq-aligned-lower-bound-of-sup-and-gradient-of-hat-h-by-L-to-l-0-norm}, we have
\begin{equation*}
\|\hat{h}\|_{L^{\left(1+\delta\right)l_1}\left(Q^+_{r_2}\left(0,-\frac{3}{8}\right),x_n^{-\alpha}\right)}\leq \frac{C_{8}}{|r_1-r_2|^{\frac{2-\alpha}{1+\delta}}}\|\hat{h}\|_{L^{1+\delta}\left(Q^+_{r_1}\left(0,-\frac{3}{8}\right),x_n^{-\alpha}\right)}
\end{equation*}
for some constant $C_{8}>0$. Define $\gamma_1=p_0>0$, $\gamma_i=l_1\gamma_{i-1}$ and $r_i=\frac{1}{8}+\left(\left(\frac{1}{8}\right)^{\frac{1}{2-\alpha}}-\frac{1}{8}\right)^{i}$. Then, the previous inequality becomes
\begin{equation*}
\|\hat{h}\|_{L^{\gamma_{i+1}}\left(Q^+_{r_{i+1}}\left(0,-\frac{3}{8}\right),x_n^{-\alpha}\right)}\leq C_{8}\left(\frac{8}{9}\right)^{\frac{2-\alpha}{\gamma_i}}\left(8^{i}\right)^{\frac{1}{\gamma_i}}\|\hat{h}\|_{L^{\gamma_i}\left(Q^+_{r_i}\left(0,-\frac{3}{8}\right),x_n^{-\alpha}\right)}
\end{equation*}
Therefore, iteration yields
\begin{equation}\label{eq-upperbound-of-max-overline-h}
\max_{Q^+_{\frac{1}{8}}\left(0,-\frac{3}{8}\right)}\hat{h}\leq C_{9}\|\hat{h}\|_{L^{p_0}\left(B^+_{\left(\frac{1}{8}\right)^{\frac{1}{2-\alpha}}}\times\left(\frac{1}{2},\frac{5}{8}\right),x_n^{-\alpha}\right)}, \quad (C_{9}>0).
\end{equation}
For the other case $\delta<-1$, we also take $\phi(x,t)=\eta^{2}(x,t)\hat{h}^{\delta}(x,t)$ as test function, choosing $\eta$ in such a way that $\eta(x,t)=1$ in $Q^+_{r_2}$, $\eta(x,t)=0$ on $\partial_pQ^+_{r_1}\cap H$, $0\leq \eta \leq 1$ in $Q_{r_1}^+$, $|\nabla\eta|\leq \frac{C_{10}}{r_1-r_2}$ and $|\eta_t|\leq \frac{C_{10}}{(r_1-r_2)^{2-\alpha}}$ for some constant $C_{10}>0$ and $\frac{1}{8}\leq r_2\leq r_1\leq\left(\frac{1}{8}\right)^{\frac{1}{2-\sigma}}$. Then, by the similar computation above, we have
\begin{equation*}
\|\hat{h}\|_{L^{(1+\delta)l_0}(Q^+_{r_2},x_n^{-\alpha})}\geq \frac{C_{11}}{|r_1-r_2|^{\frac{2-\alpha}{1+\delta}}}\|\hat{h}\|_{L^{1+\delta}(Q^+_{r_1},x_n^{-\alpha})}
\end{equation*}
for some constant $C_{11}>0$. Define $\gamma_0=-p_0<0$, $\gamma_i=l_1\gamma_{i-1}$ and $r_i=\frac{1}{8}+\left(\left(\frac{1}{8}\right)^{\frac{1}{2-\sigma}}-\frac{1}{8}\right)^{i}$. Then, the previous inequality becomes
\begin{equation*}
\|\hat{h}\|_{L^{\gamma_{i+1}}(Q^+_{r_{i+1}},x_n^{-\alpha})}\geq C_{11}\left(\frac{8}{9}\right)^{\frac{2-\alpha}{\gamma_i}}\left(8^{i}\right)^{\frac{1}{\gamma_i}}\|\hat{h}\|_{L^{\gamma_i}(Q^+_{r_i},x_n^{-\alpha})}.
\end{equation*}
Therefore, iteration yields
\begin{equation}\label{eq-lowerbound-of-min-overline-h}
\min_{Q^+_{\frac{1}{8}}}\hat{h}\geq C_{12}\|\hat{h}\|_{L^{-p_0}\left(B^+_{\left(\frac{1}{8}\right)^{\frac{1}{2-\alpha}}}\times\left(\frac{7}{8},1\right),x_n^{-\alpha}\right)}, \quad (C_{12}>0).
\end{equation}
To get the harnack's inequality for $\hat{h}$, we finally examine the case $\delta=-1$. Let $0<\rho\leq\left(\frac{1}{2}\right)^{\frac{1}{2-\alpha}}$. We take $\phi(x,t)=\eta^{2}(x,t)\hat{h}^{-1}(x,t)$ as test function, where $\eta\in C_0^{\infty}(B^+_1\times[0,1])$, $\eta(x,t)=1$ in $Q^+_{\rho}$, $\textbf{supp}(\eta)\subset Q^+_{2^{\frac{1}{2-\alpha}}\rho}\subset Q^+_{1}$ and $\nabla\eta\leq\frac{C_{13}}{\rho}$, $|\eta_t|\leq \frac{C_{13}}{\rho^{2-\alpha}}$ for some constant $C_{13}>0$. Then, since $w$ satisfies \eqref{eq-general-of-tilde-h-x-n-1} in $Q^+_1$, we can get
\begin{equation*}
\left(\frac{1}{\left|x_n^{-\alpha}(B^+_{\rho})\right|}\iint_{Q^+_{\rho}}|\nabla\log\hat{h}|^2\,dxdt\right)^{\frac{1}{2}}\leq C_{14}
\end{equation*}
for some constant $C_{14}>0$. Let $U=\log\hat{h}$ and $p=\frac{2(n-\alpha)}{n-2}$. Then, by the Lemma \ref{lem-weighted-sobolev-inequality-in-H-Koch-paper}, there exists constant $C_{15}$, $C_{16}>0$ such that
\begin{equation}\label{eq-aligned-for-applying-john-nerenberg-lemma-2}
\begin{aligned}
\frac{1}{\left|x_n^{-\alpha}(B^+_{\rho})\right|}\iint_{Q^+_{\rho}}x_n^{-\alpha}|U-U_{Q^+_{\rho}}|\,dxdt &\leq \left|x_n^{-\alpha}(B^{+}_{\rho})\right|^{-\frac{1}{p}}\cdot\int_{1-\rho^{2-\alpha}}^{1}\left(\int_{B^+_{\rho}}x_n^{-\alpha}|U-U_{Q^+_{\rho}}|^p\,dx\right)^{\frac{1}{p}}dt\\
&\leq C_{15}\rho^{-\frac{n-2}{2}}\int_{1-\rho^{2-\alpha}}^{1}\left(\int_{B^+_{\rho}}|\nabla U|^2\,dx\right)^{\frac{1}{2}}dt\\
&\leq C_{16}\rho^{-\frac{n-2}{2}+\frac{n-\alpha}{2}+2-\alpha}=C_{16}\rho^{3-\frac{3}{2}\alpha} \leq C_{16}.
\end{aligned}
\end{equation}
The parabolic version of John and Nirenberg Lemma for BMO (see \cite{Ai, FG}) yields that there exist two positive constants $P_0$ and $C_{17}$ such that
\begin{equation}\label{eq-applying-John-Nirenberg-Lemma}
\begin{aligned}
&\left(\frac{1}{\left|x_n^{-\alpha}\left(B^+_{\left(\frac{1}{8}\right)^{\frac{1}{2-\alpha}}}\times\left(\frac{1}{2},\frac{5}{8}\right)\right)\right|}\int_{\frac{1}{2}}^{\frac{5}{8}}\int_{B^+_{\left(\frac{1}{8}\right)^{\frac{1}{2-\alpha}}}}x_n^{-\alpha}e^{p_0 U}\,dxdt\right)^{\frac{1}{p_0}}\\
&\qquad \qquad \qquad \times\left(\frac{1}{\left|x_n^{-\alpha}\left(B^+_{\left(\frac{1}{8}\right)^{\frac{1}{2-\alpha}}}\times\left(\frac{7}{8},1\right)\right)\right|}\int_{\frac{7}{8}}^1\int_{B^+_{\left(\frac{1}{8}\right)^{\frac{1}{2-\alpha}}}}x_n^{-\alpha}e^{-p_0 U}\,dxdt\right)^{\frac{1}{p_0}}\leq C_{17}.
\end{aligned}
\end{equation} 
Then combining this with \eqref{eq-upperbound-of-max-overline-h} and \eqref{eq-lowerbound-of-min-overline-h}, we get
\begin{equation*}
\max_{Q^+_{\frac{1}{8}}\left(0,-\frac{3}{8}\right)}\hat{h}\leq C_{18}\min_{Q^+_{\frac{1}{8}}}\hat{h}
\end{equation*}
for some constant $C_{18}>0$ and lemma follows.
\end{proof}

We can now state the first main result of our paper.

\begin{thm}\label{theorem-Holder-estimates-for-h}
Let $w$ be a solution of equation \eqref{eq-general-of-tilde-h-x-n-1} defined in $B^+_1\times(0,1)$ with conditions \eqref{eq-assumption-for-coefficients-1} and \eqref{eq-assumption-for-coefficients-2}. Suppose that 
\begin{equation*}
\max_{Q^+_1}|w|\leq C_{|w|}<\infty.
\end{equation*}
Then, $w$ is locally H\"older continuous in $B_1^+\times(0,1)$ and 
\begin{equation*}
\|w\|_{C^{\gamma}(Q^+_s)}\leq C\left(\|w\|_{L^{\infty}(Q^+_1)}+1\right), \qquad (s<1).
\end{equation*}
\end{thm}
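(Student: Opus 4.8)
The plan is to run the classical De Giorgi--Nash--Moser oscillation--reduction argument, with Lemma~\ref{lem-Harnack-s-Inequality} as the only substantial input, on the intrinsic parabolic cubes $Q^+_r$ adapted to the operator in \eqref{eq-general-of-tilde-h-x-n-1}. Concretely, I aim to establish the oscillation decay
\begin{equation*}
\osc_{Q^+_r(P_0)}w\ \le\ Cr^{\gamma}\bigl(\|w\|_{L^\infty(Q^+_1)}+1\bigr)
\end{equation*}
for every $P_0=(x_0,t_0)$ in a fixed subdomain of $Q^+_1$ and every small $r$, from which both the H\"older continuity and the stated bound follow immediately. Two regimes have to be separated, governed by $d_0$, the distance from $x_0$ to $\{x_n=0\}$. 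In the \emph{interior regime} $r\le\tfrac12 d_0$, the weight $x_n^{\alpha}$ is comparable to $d_0^{\alpha}$ on $Q^+_r(P_0)$, so \eqref{eq-general-of-tilde-h-x-n-1} is a uniformly parabolic divergence--form equation with bounded measurable coefficients and bounded right--hand side $x_n^{\alpha}g$; after a parabolic rescaling to unit size (which leaves the ellipticity ratio unchanged and, by \eqref{eq-assumption-for-coefficients-2}, multiplies the forcing by a factor $r^2$), the classical De Giorgi--Nash estimate for such equations yields the displayed decay with constants independent of $d_0$. In the \emph{boundary regime} $r\ge\tfrac12 d_0$, where $Q^+_r(P_0)$ essentially reaches $\{x_n=0\}$ and the equation genuinely degenerates, I use Lemma~\ref{lem-Harnack-s-Inequality}.

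For the boundary regime it suffices, after translating in $x'$ and $t$ (symmetries of \eqref{eq-general-of-tilde-h-x-n-1}) and rescaling by $(x,t)\mapsto(x_0'+rx,\,t_0+r^{2-\alpha}t)$ --- the cube $Q_r=B_r\times(1-r^{2-\alpha},1)$ is precisely the parabolic cube of the operator, and this rescaling replaces the constant in \eqref{eq-assumption-for-coefficients-2} by $r^2$ times itself --- to prove one oscillation step $\osc_{Q^+_{1/8}}w\le\gamma_0\,\osc_{Q^+_1}w+C_1$ with $\gamma_0<1$. Put $M=\sup_{Q^+_1}w$, $m=\inf_{Q^+_1}w$, $\omega=M-m$. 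The nonnegative functions $w-m$ and $M-w$ each solve an equation of the form \eqref{eq-general-of-tilde-h-x-n-1}; their forcing no longer satisfies \eqref{eq-assumption-for-coefficients-2}, but it is bounded by $C_0\|w\|_{L^\infty(Q^+_1)}=:C_0L$. Since the Moser iteration carried out in the proof of Lemma~\ref{lem-Harnack-s-Inequality} only ever controls the forcing through such a bound, it gives, for any nonnegative solution $v$ with forcing bounded by $C_0L$, the inhomogeneous Harnack inequality
\begin{equation*}
\sup_{Q^+_{1/8}(0,-3/8)}v\ \le\ C\Bigl(\inf_{Q^+_{1/8}}v+C_0L\Bigr).
\end{equation*}
Applying this to $v=w-m$ and to $v=M-w$, and subtracting the two resulting inequalities exactly as in the standard passage from a parabolic Harnack inequality to oscillation decay, produces $\osc_{Q^+_{1/8}}w\le(1-\tfrac1C)\,\omega+2C_0L$. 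Tracking the $r$-gain of the forcing under the rescaling turns this into $\omega(r/8)\le\gamma_0\,\omega(r)+C_0L\,r^{2}$ with $\gamma_0=1-\tfrac1C<1$; the usual iteration lemma along $r_k=8^{-k}$ then gives $\osc_{Q^+_r(P_0)}w\le Cr^{\gamma}(L+1)$ with $\gamma=\min\{\log(1/\gamma_0)/\log 8,\ 2\}>0$.

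Patching the two regimes at the matching scale $r\simeq d_0$ gives the oscillation decay displayed above for every $P_0$ and every small $r$; rewriting it in terms of the intrinsic metric, and using that $\overline{s}(P_1,P_2)\ge c\,|P_1-P_2|$ on bounded sets --- so that control in the intrinsic metric implies control in the Euclidean one --- yields $\|w\|_{C^{\gamma}(Q^+_s)}\le C\bigl(\|w\|_{L^\infty(Q^+_1)}+1\bigr)$ for each $s<1$, as claimed.

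The genuine analytic work is already packaged inside Lemma~\ref{lem-Harnack-s-Inequality}; what is left demands care rather than new ideas, and I expect the main obstacles to be two. First, one must check that the zeroth--order forcing of \eqref{eq-assumption-for-coefficients-2} does not obstruct applying the Harnack machinery to the shifted functions $w-m$ and $M-w$: this works because that forcing is globally bounded by $C_0\|w\|_{L^\infty}$ and, under the rescaling to radius $r$, shrinks like $r^{2}$, so it behaves as a harmless $r^2$-small inhomogeneity rather than a true zeroth--order term. Second, one must glue the degenerate boundary estimate to the uniformly parabolic interior estimate with constants that remain bounded as $d_0\to0$. The choice of the parabolic cube $Q_r=B_r\times(1-r^{2-\alpha},1)$, under which both the boundary rescaling in the first point and the scale--matching in the second are the natural ones, is precisely what makes this bookkeeping close.
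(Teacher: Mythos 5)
Your proposal follows the same overall strategy as the paper's proof: apply the Harnack estimate of Lemma~\ref{lem-Harnack-s-Inequality} to the two shifted, nonnegative auxiliary functions built from the supremum/infimum of $w$ on nested intrinsic cubes, derive a one--step oscillation inequality, and iterate. So this is not a genuinely different route. What you do differently, and where your proposal is actually \emph{more} careful than the paper, is in the bookkeeping of the inhomogeneous term under rescaling. The paper applies the Harnack inequality with the fixed ``$+1$'' shift at every scale $r$ and records the oscillation step as
$\osc\!\left(\tfrac{r}{8}\right)\le \tfrac{C-1}{C+1}\,\osc(r)+\tfrac{2(C-1)}{C+1}$,
whose additive error is independent of $r$. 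Iterating that inequality literally gives $\osc(8^{-k})\le\theta^{k}\osc(1)+\tfrac{2\theta}{1-\theta}$ with $\theta=\tfrac{C-1}{C+1}$, and the oscillation converges to the positive constant $\tfrac{2\theta}{1-\theta}$ rather than to zero; so as written the paper's iteration does not by itself yield H\"older continuity, and the displayed conclusion $\osc(r)\le C'r^{\gamma}[\osc(1)+1]$ does not follow from that recursion. Your proposal repairs this by noting that the rescaling $(x,t)\mapsto(x_0'+rx,\,t_0+r^{2-\alpha}t)$ multiplies the forcing by $r^{2}$, so that the additive error in the one--step oscillation inequality is $C_0Lr^{2}$ and does decay; the resulting iteration $\omega(r/8)\le\gamma_0\omega(r)+C_0Lr^{2}$ then genuinely gives $\omega(r)\lesssim r^{\gamma}(L+1)$. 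You also make explicit the interior/boundary dichotomy (uniformly parabolic De~Giorgi--Nash in the interior, degenerate Harnack at the boundary) that the paper compresses into ``by translating and dilating.'' Both points are improvements on the printed argument; the only minor wrinkle in your write--up is the remark that $\overline{s}(P_1,P_2)\ge c|P_1-P_2|$ gives Euclidean control from intrinsic control --- that inequality runs in the other direction, but since your oscillation decay is already stated on the cubes $Q^{+}_r$ (Euclidean balls in $x$ and intrinsic scaling only in $t$), the passage to the claimed H\"older norm is unaffected.
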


\begin{proof}
For $\overline{h}=w+C_{|w|}$, let $m_r=\inf_{Q^+_r}\overline{h}$, $M_r=\sup_{Q^+_r}\overline{h}$. Then $\overline{h}-m_r+1$, $M_r-\overline{h}+1\geq 1$ and satisfy the equation \eqref{eq-equaiton-for-hat-h-20} in $Q_r$. Applying the Harnack inequality (Lemma \ref{lem-Harnack-s-Inequality}) to those equations, we get
\begin{equation*}
\begin{aligned}
M_r-M_{\frac{r}{8}}+1=\inf_{Q^+_{\frac{r}{8}}}(M_r-\overline{h}+1)\geq \frac{1}{C}\sup_{Q^+_{\frac{r}{8}}\left(0,-\frac{5r}{8}\right)}(M_r-\overline{h}+1)=\frac{1}{C}\left(M_{r}-m_{\frac{r}{8}}+1\right)
\end{aligned}
\end{equation*}
and 
\begin{equation*}
\begin{aligned}
m_{\frac{r}{8}}-m_r+1=\inf_{Q^+_{\frac{r}{8}}}(\overline{h}-m_r+1)\geq \frac{1}{C}\sup_{Q^+_{\frac{r}{8}}\left(0,-\frac{5r}{8}\right)}(\overline{h}-m_r+1)=\frac{1}{C}\left(M_{\frac{r}{8}}-m_r+1\right).
\end{aligned}
\end{equation*}
Hence,
\begin{equation*}
M_{\frac{r}{8}}-m_{\frac{r}{8}}\leq \left(\frac{C-1}{C+1}\right)\left(M_r-m_r\right)+\frac{2(C-1)}{C+1}.
\end{equation*}
Let $\osc(r)=M_r-m_r$. Then
\begin{equation*}
\osc\left(\frac{r}{8}\right)\leq \left(\frac{C-1}{C+1}\right)\osc(r)+\frac{2(C-1)}{C+1}.
\end{equation*}
By an elementary iteration, we get
\begin{equation*}
\osc(r)\leq C'r^{\gamma}\left[\osc(1)+1\right]
\end{equation*}
for some constant $C'>0$ depending on $C$, i.e., $\overline{h}$ is H\"older at $(x,t)=(0,1)$. Since $\overline{h}=w+C_{|w|}$, we also have H\"older continuity of $w$ at $(x,t)=(0,1)$. Finally, by translating and dilating, we have
\begin{equation*}
\|w\|_{C^{\gamma}(Q^+_s)}\leq C''\left(\|w\|_{L^{\infty}(Q^+_1)}+1\right), \qquad (s<1)
\end{equation*}
and lemma follows.
\end{proof}

\section{H\"older estimates II}
\setcounter{equation}{0}
\setcounter{thm}{0}
Through the previous section, we have established that the gradient of $h$ in \eqref{eq-for-sol-h-of-fixed-bondary-problem-in-intro} with respect to $x_i$, $(i=1,\cdots,n-1)$, is H\"older continuous near the boundary $\partial\Omega$. However, one can easily check that $h_{z}$ is not a solution of \eqref{eq-for-sol-h-x-i-of-fixed-bondary-problem-in-intro}. Hence, solving the missing step \eqref{eq-missing step for the regularity} is not completed at this moment. Thus, we will devote this section for the H\"older regularity for $h_z$.\\
\indent Let $w$ be a solution of the problem
\begin{equation}\label{eq-general-of-tilde-h-x-n-1-original}
\begin{cases}
\begin{aligned}
&w_{t}=x_n^{\alpha}\left(a^{ij} w_{ij}\right) \qquad \qquad \mbox{in $H$}\\
&w(x,t)=0 \qquad \qquad \qquad \mbox{on $x_n=0$}\\
&w(x,0)=w_0(x) \qquad \qquad \quad \mbox{in $H$}
\end{aligned}
\end{cases}
\end{equation}
with initial value $w_0(x)\in C^{0,1}(H)$. Assume that the coefficients $a^{ij}(x,t)$ are measurable functions and satisfy
\begin{equation}\label{eq-assumption-for-coefficients-1-for-original}
\lambda|\xi|^2\leq a^{ij}(x,t)\xi_i\xi_j\leq \Lambda|\xi|^2, \qquad (i,j=1,\cdots,n)
\end{equation}
for some constants $0<\lambda\leq\Lambda<\infty$. \\
\indent To get the H\"older regularity for $h_z$, we will first construct an important, for our purpose, barrier function.

\begin{lem}\label{lem-barrier-fuction-for-subsolution-with-nontrivial-gradient}
Let $\mathcal{Q}=B^+_r\times(0,1)$ and $P_r(x')=(x',r)\in H$ for $0<r<1$ and let $K=\left\{x\in B_{r}^+:|x_i-\left(P_{\frac{r}{4}}(0)\right)_i|<\frac{r}{8}, \,\,i=1,\cdots,n\right\}$. Then, there exists a solution $f>0$ and time $T(r)>0$ such that
\begin{equation}\label{eq-cases-aligned-barrier-function-f}
\begin{cases}
\begin{aligned}
x_n^{\alpha}\left(a^{ij}f_{ij}\right)&-f_t=0 \qquad \qquad \qquad  (x,t)\in \mathcal{Q}\\
f&=0 \qquad \qquad \qquad \qquad \partial_l\mathcal{Q}\\
|\nabla f|&\geq c(T)>0 \qquad \qquad \partial B^+_{\frac{r}{16}}\cap\{x_n=0\}, \quad t\geq T.
\end{aligned}
\end{cases}
\end{equation}
Moreover, at $t=0$, $f$ satisfies
\begin{equation}\label{eq-cases-aligned-barrier-function-f-initial-condition}
\supp\{f(\cdot,0)\}\subset K \qquad \mbox{and} \qquad 0\leq f(x,0) \leq m_0\textbf{1}_{_{K}}<\infty
\end{equation}
for a constant $m_0>0$.
\end{lem}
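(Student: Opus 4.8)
The plan is to take $f$ to be the solution of the initial-boundary value problem \eqref{eq-cases-aligned-barrier-function-f} with a conveniently chosen initial datum, and then to read off the three asserted properties from the maximum principle together with a Hopf-type estimate at the flat face. Concretely, I would fix a nonnegative $\psi\in C_0^\infty(K)$ with $0\le\psi\le m_0$ and $\psi>0$ on a subcube of $K$, and let $f$ solve $x_n^\alpha a^{ij}f_{ij}-f_t=0$ in $\mathcal Q=B_r^+\times(0,1)$ with $f=0$ on $\partial_l\mathcal Q$ and $f(\cdot,0)=\psi$. Existence of such an $f$ with $0\le f\le m_0$ follows by approximation: replace $x_n^\alpha$ by $(x_n+\varepsilon)^\alpha$, mollify the $a^{ij}$ to obtain uniformly parabolic problems, solve these classically, and pass to the limit using the interior De Giorgi--Nash--Moser estimates (the operator being uniformly parabolic on $\{x_n>\sigma\}$) together with the boundary estimates already available for this class of degenerate operators as in \cite{Ko,KL}. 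The limit $f$ solves \eqref{eq-cases-aligned-barrier-function-f} and obviously satisfies \eqref{eq-cases-aligned-barrier-function-f-initial-condition}.

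Next I would record strict positivity and a quantitative interior lower bound. Since $\operatorname{supp}\psi\subset K$ is compactly contained in $\{x_n>0\}$, the operator is uniformly parabolic there, so the strong maximum principle gives $f>0$ in $\mathcal Q$. Fixing once and for all a time $T_0\in(0,1)$, the interior parabolic Harnack inequality, applied in the uniformly parabolic region and chained forward over the bounded interval $[T_0,1)$, produces $\sigma_0\in(0,\tfrac r{64})$ and $\eta>0$, depending only on $r,T_0,\lambda,\Lambda,n$ and $\psi$, with
\[
f(x,t)\ \ge\ \eta\qquad\text{for }x\in\overline{B^+_{r/8}}\cap\{x_n\ge\sigma_0\},\quad t\in[T_0,1).
\]

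Finally --- and this is the crux --- I would upgrade the interior bound to the gradient bound on $\partial B^+_{r/16}\cap\{x_n=0\}$. There $f\equiv 0$, hence $|\nabla f|=\partial_{x_n}f$, so it suffices to find $c=c(T)>0$ with $\partial_{x_n}f(x'_0,0,t)\ge c$ for all $|x'_0|\le r/16$ and all $t\ge T$, with $T=T(r)\in(T_0,1)$ to be fixed. I would argue locally: in the half-ball $D_{x'_0}=\bigl(B_{\sigma_0}((x'_0,0))\cap\{x_n>0\}\bigr)\times(T_0,1)$, compare $f$ from below with a subsolution $\varphi$ of the same degenerate operator which vanishes on $\{x_n=0\}$, vanishes on the lower portion of the spherical cap and at $t=T_0$, is $\le\eta$ on the upper portion of the cap (where $x_n$ is comparable to $\sigma_0$, so that $f\ge\eta$ there by the previous step after harmlessly shrinking $\sigma_0$), and satisfies $\partial_{x_n}\varphi(x'_0,0,t)\ge c>0$ for $t\ge T$; the comparison principle then forces $f\ge\varphi$ in $D_{x'_0}$, and since $f$ and $\varphi$ both vanish at $(x'_0,0)$ this yields $\partial_{x_n}f(x'_0,0,t)\ge\partial_{x_n}\varphi(x'_0,0,t)\ge c$. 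A covering of the compact disk $\{|x'|\le r/16,\ x_n=0\}$, together with the choice of $T$, then finishes the lemma. The hard part is the construction of $\varphi$: because the operator degenerates exactly on $\{x_n=0\}$, naive product or quadratic barriers fail --- the mixed second-order contribution $x_n^\alpha a^{i'n}\partial_{x_n}\partial_{x_{i'}}\varphi$ is not dominated by the good normal contribution $x_n^\alpha a^{nn}\partial_{x_n}^2\varphi$ near the lateral edge of the support of $\varphi$, and a time factor vanishing at $t=T_0$ is incompatible with the degeneracy. I would instead pass to the Bessel-type normal variable $y=\tfrac{2}{2-\alpha}x_n^{(2-\alpha)/2}$, under which $x_n^\alpha a^{nn}\partial_{x_nx_n}$ becomes $a^{nn}\bigl(\partial_{yy}-\tfrac{\alpha}{2-\alpha}\,y^{-1}\partial_y\bigr)$ while the tangential and mixed second-order terms acquire strictly positive powers of $y$ (namely $x_n^{2\alpha/(2-\alpha)}$ and $x_n^{\alpha/(2-\alpha)}$) and hence vanish on $\{y=0\}$; thus near the flat face the equation is a lower-order perturbation, with vanishing leading coefficient, of the one-dimensional degenerate equation $v_t=a^{nn}\bigl(v_{yy}-\tfrac{\alpha}{2-\alpha}y^{-1}v_y\bigr)$, whose bounded solutions vanishing at $y=0$ behave like $y^{2/(2-\alpha)}\simeq x_n$ (indeed $x_n$ is itself an exact solution) and which carries an explicit barrier with a definite positive normal derivative. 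Absorbing the tangential coefficients --- merely measurable, but carrying the small factor $x_n^{2\alpha/(2-\alpha)}$ --- into this one-dimensional picture is where I expect the real work to lie; it can be carried out with the weighted De Giorgi--Nash--Moser and boundary-Harnack theory appropriate to the Muckenhoupt weight $x_n^{-\alpha}\in A_2$, i.e.\ within the framework of \cite{Ko}.
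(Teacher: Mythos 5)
Your strategy is genuinely different from the paper's, and the difference is worth noting: the paper never invokes a Harnack chain or a boundary-Hopf lemma. Instead it constructs two explicit subsolutions --- a spreading Gaussian $\tilde g(x,t)=e^{-Mt}(4\pi(t+\tau_0))^{-n/2}e^{-\beta|x-P_{r/4}|^2/(t+\tau_0)}$, truncated at level $\epsilon_0$ to control its support, and a second function $G=\epsilon_1\bigl(1-\tfrac{4|x-P_{r/4}|^2}{r^2}\bigr)\tilde g$ --- and verifies the subsolution inequality $x_n^\alpha a^{ij}\partial_{ij}-\partial_t\ge0$ directly from the ellipticity bounds \eqref{eq-assumption-for-coefficients-1}, for suitable $\beta,\tau_0,M,\epsilon_1$. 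The point is that $\tilde g_{\epsilon_0}$ propagates mass from $K$ out to touch $\{x_n=0\}$ at time $T$, and $G$, supported in the ball $B_{r/4}(P_{r/4})$ which is tangent to the flat face at $(x',0)$, has a manifestly nonzero normal derivative at that tangency point for all $t\ge T$. Comparison then gives $f\ge\tilde g_{\epsilon_0}$ and $f\ge G$, hence the Hopf-type lower bound, with no change of variables and no appeal to boundary regularity theory. Your route --- interior Harnack on $\{x_n\ge\sigma_0\}$ followed by a local comparison with a barrier $\varphi$ built in the Bessel variable $y=\tfrac{2}{2-\alpha}x_n^{(2-\alpha)/2}$ --- would, if carried out, also avoid the explicit Gaussian computation and would probably localize more cleanly.

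However, as you yourself flag, the proposal does not actually construct the barrier $\varphi$, and that construction is the entire content of the lemma. You correctly diagnose the obstruction (the mixed term $x_n^\alpha a^{i'n}\partial_{x_{i'}x_n}$ ruins quadratic barriers near the edge of the support, and the normal degeneracy fights any time factor vanishing at $t=T_0$), and the change to the Bessel variable is a sensible normalization, but after it the mixed and tangential coefficients, while vanishing as $y\to0$, are still merely measurable and must be dominated near the \emph{lateral} rim of the support of $\varphi$, where $y$ is not small. The phrase ``it can be carried out within the framework of \cite{Ko}'' is exactly the step that needs a proof; as written it is an appeal to a boundary-Harnack/Hopf theorem for the weighted operator that you would have to establish. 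Until $\varphi$ is exhibited with $x_n^\alpha a^{ij}\partial_{ij}\varphi-\partial_t\varphi\ge0$, $\varphi\le\eta$ on the upper cap, $\varphi=0$ on $\{x_n=0\}$ and on the lower cap, and $\partial_{x_n}\varphi(x_0',0,t)\ge c>0$ for $t\ge T$, the proof has a genuine gap at its central step. The paper closes precisely this gap by producing $G$ explicitly and checking the differential inequality \eqref{eq-aligned-show-G-to-be-sub-solution} by hand under the conditions \eqref{eq-range-of-beta-and-M-lower-bound-2}--\eqref{eq-range-of-M-lower-bound-2}.
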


\begin{proof}
In this proof, we will use a modification of the technique of \cite{KL2} to prove the lemma. For each $x'\in \R^{n-1}$ such that $|x'|<\frac{r}{16}$, let $P_{\frac{r}{4}}=P_{\frac{r}{4}}(x')$ and consider
\begin{equation*}
g(x,t)=\frac{1}{(4\pi t)^{\frac{n}{2}}}e^{-\frac{\beta\left|x-P_{\frac{r}{4}}\right|^2}{t}}
\end{equation*}
and $\tilde{g}(x,t)=e^{-Mt}g(x,t+\tau_0)$ for $\tau_0>0$ we can choose later. By a direct computation, we can get
\begin{equation*}
\tilde{g}_t(x,t)=e^{-Mt}g(x,t+\tau_0)\left[-M-\frac{n}{2(t+\tau_0)}+\frac{\beta\left|x-P_{\frac{r}{4}}\right|^2}{(t+\tau_0)^2}\right]
\end{equation*}
and
\begin{equation*}
\tilde{g}_{ij}=e^{-Mt}g(x,t+\tau_0)\left[\frac{4\beta^2\left(x_i-P_{\frac{r}{4},i}\right)\left(x_j-P_{\frac{r}{4},j}\right)}{(t+\tau_0)^2}\right], \qquad (i\neq j \,\, \mbox{and}\,\, 1\leq i,j\leq n)
\end{equation*}
and 
\begin{equation*}
\tilde{g}_{ii}=e^{-Mt}g(x,t+\tau_0)\left[-\frac{2\beta}{t+\tau_0}+\frac{4\beta^2\left(x_i-P_{\frac{r}{4},i}\right)^2}{(t+\tau_0)^2}\right], \qquad (1\leq i\leq n).
\end{equation*}
Hence
\begin{equation*}
x_n^{\alpha}\left(a^{ij}\tilde{g}_{ij}\right)-\tilde{g}_t\geq e^{-Mt}g(x,t+\tau_0)\left[M-\frac{2\beta\Lambda r^{\alpha}}{\tau_0}-\frac{4\beta r^2}{\tau_0^2}\right]\geq 0
\end{equation*}
if 
\begin{equation}\label{eq-range-of-M-lower-bound}
M\geq \frac{2\beta\Lambda r^{\alpha}}{\tau_0}+\frac{4\beta r^2}{\tau_0^2}.
\end{equation}
Now, we define the function $\tilde{g}_{_{\epsilon_0}}$ by
\begin{equation*}
\tilde{g}_{_{\epsilon_0}}=\max\{0, \tilde{g}-\epsilon_0\}
\end{equation*}
for small $\epsilon_0>0$. Since $\tilde{g}_{_{\epsilon_0}}$ takes maximum value of two subsolutions to \eqref{eq-general-of-tilde-h-x-n-1-original} at each point in $H$, it is also a subsolution of \eqref{eq-general-of-tilde-h-x-n-1-original}. We are going to choose proper constants $\beta$, $\tau_0$ and $\epsilon_0$ so that
\begin{equation}\label{eq-condition-1-for-barrier-subsolution}
\supp\{\tilde{g}_{_{\epsilon_0}}(\cdot,0)\}\subset K \qquad \mbox{and} \qquad 0<\tilde{g}_{_{\epsilon_0}}(P_r,0)\leq m_0
\end{equation}
and there is a constant $T>0$ satisfying
\begin{equation}\label{eq-condition-2-for-barrier-subsolution}
\supp\{\tilde{g}_{_{\epsilon_0}}(\cdot,T)\}=B_{\frac{r}{4}}(P_{\frac{r}{4}}).
\end{equation}
\indent For \eqref{eq-condition-1-for-barrier-subsolution}, we take sufficiently small constant $\tau_0$ and large one $\beta>>1$ so that
\begin{equation}\label{eq-choose-constant-tau-0-and-beta-larger-than-1-1}
\frac{1}{\left(4\pi\tau_0\right)^{\frac{n}{2}}}e^{-\frac{\beta r^2}{16^2\tau_0}}<\epsilon_0 \qquad \mbox{and} \qquad \epsilon_0<\frac{1}{\left(4\pi\tau_0\right)^{\frac{n}{2}}}<m_0+\epsilon_0.
\end{equation}
Then, 
\begin{equation*}
\tilde{g}(\cdot,0)<\epsilon_0 \quad \mbox{on $B_r^+\bs K$} \qquad \mbox{and} \qquad \epsilon_0<\tilde{g}(P_r,0)\leq m_0+\epsilon_0.
\end{equation*}
Thus, the condition \eqref{eq-condition-1-for-barrier-subsolution} holds for sufficiently large constant $\tau_0$ and $\beta$.\\
\indent For \eqref{eq-condition-2-for-barrier-subsolution}, we will focus on $\supp\{\tilde{g}_{_{\epsilon_0}}(\cdot,t)\}$. Since $\tilde{g}_{_{\epsilon_0}}(x,t)$ is radially symmetric about the point $P_{\frac{r}{2}}$, for each $t\geq 0$, $\epsilon_0$-level set of $\tilde{g}_{_{\epsilon_0}}(\cdot,t)$ is a ball centered at $P_{\frac{r}{2}}$. Let's denote by $R(t)$ the radius of the $\epsilon_0$-level set of $\tilde{g}_{_{\epsilon_0}}(\cdot,t)$. Then, one can easily check that there exists time $t=t'$ such that $R(t)$ is increasing on $(0,t')$ and decreasing on $(t',\infty)$. Hence, at time $t'>0$, $R(t)$ satisfies 
\begin{equation*}
\frac{d \{R(t)\}}{dt}=0 \qquad \mbox{at $t=t'$}.
\end{equation*}
To show that there exists a time $T>0$ such that 
\begin{equation*}
\supp\{\tilde{g}_{_{\epsilon_0}}(\cdot,T)\}=B_{\frac{r}{4}}(P_{\frac{r}{4}}),
\end{equation*}
we will show that $R(t')\geq \frac{r}{2}$ with the following equation
\begin{equation}\label{eq-epsilon-0-level-set-with-radius-R-t-29349}
e^{-Mt-\frac{\beta R(t)^2}{t+\tau_0}}=\epsilon_0[4\pi(t+\tau_0)]^{\frac{n}{2}}.
\end{equation}
Differentiating on both sides of \eqref{eq-epsilon-0-level-set-with-radius-R-t-29349} with respect to $t$, we can get
\begin{equation}\label{eq-lower-bound-of-R-t-with-M-and-tau-0-and-beta}
R(t')^2=\frac{(t'+\tau_0)^2}{\beta}\left[M+\frac{\epsilon_0n}{2(t'+\tau_0))}\right]\geq \frac{\tau_0^2M}{\beta} \qquad \mbox{at $t=t'$}.
\end{equation}
By \eqref{eq-range-of-M-lower-bound}, the last term of \eqref{eq-lower-bound-of-R-t-with-M-and-tau-0-and-beta} is bounded below by $4r^2$. Hence, we can get
\begin{equation*}
R(t')\geq 2r.
\end{equation*}
This immediately implies that there exists a time $0<T\leq t'$ such that
\begin{equation*}
R(T)=\frac{r}{4}
\end{equation*}
and \eqref{eq-condition-2-for-barrier-subsolution} holds. Since \eqref{eq-condition-2-for-barrier-subsolution} is true, one can easily check that there exists a constant $c=c(T)$ such that
\begin{equation*}
\left|\nabla\tilde{g}_{\epsilon_0}(P_0,T)\right|\geq c(T)>0.
\end{equation*}
\noindent Next, we define function $G$ by
\begin{equation*}
\begin{aligned}
G(x,t)&=\epsilon_1\left(1-\frac{4\left|x-P_{\frac{r}{4}}\right|^2}{r^2}\right)\tilde{g}(x,t)\\
&=\frac{\epsilon_1}{(4\pi(t+\tau_0))^{\frac{n}{2}}}\left(1-\frac{4\left|x-P_{\frac{r}{4}}\right|^2}{r^2}\right)e^{-Mt-\frac{\beta\left|x-P_{\frac{r}{4}}\right|^2}{t+\tau_0}} \quad \mbox{in $B_{\frac{r}{4}}(P_{\frac{r}{4}})\times[T,1)$}.
\end{aligned}
\end{equation*}
for some constant $\epsilon_1>0$. Note that if we choose $\epsilon_1$ sufficiently small, then 
\begin{equation*}
G(x,T)\leq \tilde{g}_{\epsilon}(x,T), \qquad \mbox{in $B_{\frac{r}{4}}(P_{\frac{r}{4}})$}
\end{equation*}
since, for some constants $c_1$ and $c_2$,
\begin{equation*}
|\nabla\tilde{g}_{\epsilon}(\cdot,T)|\geq c_1>0 \quad \mbox{and} \quad |\nabla G|\leq c_2<\infty \qquad \mbox{on $\partial B_{\frac{r}{4}}(P_{\frac{r}{4}})$}.
\end{equation*}
On the other hand, by a direct computation, we can get
\begin{equation*}
G_t(x,t)=G\left[-M-\frac{n}{2(t+\tau_0)}+\frac{\beta\left|x-P_{\frac{r}{4}}\right|^2}{(t+\tau_0)^2}\right]
\end{equation*}
and
\begin{equation*}
\begin{aligned}
G_{ij}&=G\left[\frac{4\beta^2\left(x_i-P_{\frac{r}{4},i}\right)\left(x_j-P_{\frac{r}{4},j}\right)}{(t+\tau_0)^2}\right]\\
&\qquad \quad +\epsilon_1\tilde{g}\left[\frac{8\beta\left(x_i-P_{\frac{r}{4},i}\right)\left(x_j-P_{\frac{r}{4},j}\right)}{r^2(t+\tau_0)}\right], \qquad (i\neq j \,\, \mbox{and}\,\, 1\leq i,j\leq n)
\end{aligned}
\end{equation*}
and 
\begin{equation*}
G_{ii}=G\left[-\frac{2\beta}{t+\tau_0}+\frac{4\beta^2\left(x_i-P_{\frac{r}{4},i}\right)^2}{(t+\tau_0)^2}\right]+\epsilon_1\tilde{g}\left[\frac{8\beta|x_i-P_{\frac{r}{4},i}|^2}{r^2(t+\tau_0)}-\frac{8}{r^2}\right], \qquad (1\leq i\leq n).
\end{equation*}
Then
\begin{equation}\label{eq-aligned-show-G-to-be-sub-solution}
\begin{aligned}
x_n^{\alpha}\left(a^{ij}G_{ij}\right)-G_t\geq & G\left[\frac{M}{2}-\frac{2\beta r^{\alpha}}{T+\tau_0}-\frac{r^2}{16(T+\tau_0)^2}\right]\\
&+\epsilon_1\tilde{g}\left[\frac{M}{2}\left(1-\frac{4\left|x-P_{\frac{r}{4}}\right|^2}{r^2}\right)+\frac{8\lambda\beta x_n^{\alpha}\left|x-P_{\frac{r}{4}}\right|^2}{r^2(1+\tau_0)}-\frac{8\Lambda x_n^{\alpha}}{r^2}\right]
\end{aligned}
\end{equation}
in $B_{\frac{r}{4}}(P_{\frac{r}{4}})\times[T,1)$. Let
\begin{equation}\label{eq-range-of-beta-and-M-lower-bound-2}
\beta\geq \frac{64\Lambda(1+\tau_0)}{\lambda r^2} \qquad \mbox{and} \qquad M\geq \frac{256\Lambda}{15r^2}.
\end{equation}
Then, the second term on the right hand side of \eqref{eq-aligned-show-G-to-be-sub-solution} is positive. Hence, 
\begin{equation*}
x_n^{\alpha}\left(a^{ij}G_{ij}\right)-G_t\geq 0 \qquad \mbox{in $B_{\frac{r}{4}}(P_{\frac{r}{4}})\times[T,1)$}
\end{equation*}
if
\begin{equation}\label{eq-range-of-M-lower-bound-2}
M\geq \max\left\{\frac{4\beta\Lambda r^{\alpha}}{\tau_0}+\frac{8\beta r^2}{\tau_0^2},\frac{256\Lambda}{15r^2}\right\}.
\end{equation}
\indent Now we let $f$ be a solution of 
\begin{equation}\label{eq-cases-aligned-barrier-function-f-approximated-by-f}
\begin{cases}
\begin{aligned}
x_n^{\alpha}\left(a^{ij}f_{ij}\right)&-f_t=0 \qquad \qquad \qquad  (x,t)\in \mathcal{Q}\\
f(x,t)&=0 \qquad \qquad \qquad \qquad (x,t)\in\partial_l\mathcal{Q}\\
f(x,0)=&m_0\textbf{1}_{K} \qquad \qquad \qquad x\in B_{r}^{+}.
\end{aligned}
\end{cases}
\end{equation}
Then, for sufficiently small $\epsilon_1>0$ and $M$ in \eqref{eq-range-of-M-lower-bound-2} and $\beta$ satisfying \eqref{eq-choose-constant-tau-0-and-beta-larger-than-1-1} and \eqref{eq-range-of-beta-and-M-lower-bound-2}, we have
\begin{equation*}
f\geq \tilde{g}_{\epsilon} \qquad \mbox{in $B_r^+\times(0,T]$}
\end{equation*}
and 
\begin{equation*}
f\geq G \qquad \mbox{in $B_{\frac{r}{4}}(P_{\frac{r}{4}})\times[T,1)$}
\end{equation*}
by the comparison principle. Since $x'\in\R^{n-1}$ is an arbitrary point with $|x'|\leq\frac{r}{16}$, by the definition of functions $\tilde{g}_{\epsilon_0}$ and $G$, the solution $f$ has nontrivial gradients on $\partial B_{\frac{r}{16}}^+\cap\{x_n=0\}$, i.e., the third inequality of \eqref{eq-cases-aligned-barrier-function-f} holds.\\ 
\indent To complete the proof of lemma, we are now going to show the existence of solution $f$ to \eqref{eq-cases-aligned-barrier-function-f} with initial condition \eqref{eq-cases-aligned-barrier-function-f-initial-condition}. We begin by constructing a sequence of approximate domian so as to avoid the degeneracy of the equation. We may simply put
\begin{equation*}
B^+_{r,n}=B_{r}^+\cap\left\{x_n>\frac{r}{n}\right\}.
\end{equation*}
We now solve the problem
\begin{equation}\label{eq-cases-aligned-barrier-function-f-approximated-by-f-n}
\begin{cases}
\begin{aligned}
x_n^{\alpha}\left(a^{ij}(f_n)_{ij}\right)&-(f_n)_t=0 \qquad \qquad \qquad  (x,t)\in B_{r,n}^+\times(0,1)\\
f_n(x,t)&=0 \qquad \qquad \qquad \qquad (x,t)\in\partial_lB_{r,n}^+\times(0,1)\\
f_n(x,0)=&m_0\textbf{1}_{K} \qquad \qquad \qquad x\in B_{r,n}^+.
\end{aligned}
\end{cases}
\end{equation}
The maximum principle, which holds for classical solutions, implies that
\begin{equation*}
0\leq f_n\leq m_0 \qquad \mbox{in $B_{r,n}^+\times(0,1)$}.
\end{equation*}
Moreover, again by the maximum principle
\begin{equation*}
f_{n}\leq f_{n+1} \qquad \mbox{in $B_{r,n}^+\times(0,1)$}, \qquad \forall n\geq 4.
\end{equation*}
Hence we can define the function
\begin{equation*}
f(x,t)=\lim_{n\to\infty}f_n(x,t), \qquad \mbox{in $\mathcal{Q}$} 
\end{equation*}
as a monotone limit of bounded non-negative functions. On the other hand, we also have
\begin{equation*}
f_n(x,0)\leq \frac{4m_0}{r_0}x_n \qquad \mbox{in $B_{r,n}^+$}.
\end{equation*}
Hence, $0\leq f\leq Mx_n$ for some constant $M>0$ and $f$ is continuous up to the boundary. Since $f_n$ is a classical solution of \eqref{eq-cases-aligned-barrier-function-f-approximated-by-f-n}, $f$ clearly satisfies \eqref{eq-cases-aligned-barrier-function-f} and \eqref{eq-cases-aligned-barrier-function-f-initial-condition}. Hence, we complete the proof.
\end{proof}

With this barrier function, we are going to show the second main result for our paper: $C^{1,\gamma}$ regularity of solution $w$ to \eqref{eq-general-of-tilde-h-x-n-1-original}.

\begin{lem}\label{lem-C-1-gamma-regularity-near-the-boundary}
Let $w$ be a solution of \eqref{eq-general-of-tilde-h-x-n-1-original}. If $w$ is in $C^{0,1}(H)$, with respect to space variable $x$, then, it is also in $C^{1,\gamma}$, $(0<\gamma<1)$ near the plane $\{x_n=0\}$, i.e., for each $x_0\in \{x_n=0\}$ and $T>0$, there is a constant $L$ which is depending on $x_0$ such that
\begin{equation}\label{eq-in-lemma-for-c-1-beta-regularity}
|w(x,T)-Lx_n|\leq C_0|x-x_0|^{1+\gamma}, \qquad \mbox{in $B_r^+(x_0)$}
\end{equation}
for some constant $C_0>0$
\end{lem}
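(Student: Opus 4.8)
\emph{Strategy.} After a translation we may take $x_0=0$. Since $w$ vanishes on $\{x_n=0\}$, the only affine function that can approximate $w$ near $0$ to first order is $Lx_n$ for a single constant $L$, so \eqref{eq-in-lemma-for-c-1-beta-regularity} is equivalent to saying that the ratio $w(\cdot,T)/x_n$ has a H\"older modulus of continuity at $0$. The plan is: (i) a boundary Lipschitz bound $|w|\le C x_n$ showing that $w/x_n$ is bounded on a small one-sided round cube at $(0,T)$; (ii) a boundary Harnack inequality for nonnegative solutions $v$ of \eqref{eq-general-of-tilde-h-x-n-1-original} vanishing on $\{x_n=0\}$, namely $\sup(v/x_n)\le C\inf(v/x_n)$ on slightly smaller cubes; (iii) apply (ii) to the two nonnegative solutions $w-m_rx_n$ and $M_rx_n-w$, where $m_r=\inf_{Q_r^+}(w/x_n)$ and $M_r=\sup_{Q_r^+}(w/x_n)$, to obtain geometric decay of $\osc(w/x_n)$ on dyadic cubes, and then pass to the limit $r\to 0$.

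\emph{Step 1: boundary Lipschitz bound.} To control $w/x_n$ near $0$ I would compare $|w|$ on a half-cube $Q^+_{\rho_0}(0,T)$ with an explicit supersolution of \eqref{eq-general-of-tilde-h-x-n-1-original} comparable to $x_n$, for instance of the form $\Psi(x)=x_n(C_1-C_2|x'|^2-C_3x_n)$, multiplied if needed by a spatial cutoff to handle the region meeting $\partial B_{\rho_0}\cap\{x_n=0\}$. Using only the ellipticity \eqref{eq-assumption-for-coefficients-1-for-original}, a direct computation of $a^{ij}\Psi_{ij}$ shows that for $C_3$ large relative to $C_1,C_2,\Lambda/\lambda$ one has $x_n^\alpha a^{ij}\Psi_{ij}\le 0=\Psi_t$; on $\{x_n=0\}$ both sides of $|w|\le\Psi$ vanish; on the rest of the parabolic boundary $\Psi$ dominates $|w|$ once $C_1$ is large, using $\sup_{Q_1^+}|w|<\infty$ together with Theorem \ref{theorem-Holder-estimates-for-h} and, on the initial slice, $|w_0|\le Cx_n$ (this is where $w_0\in C^{0,1}(H)$ enters). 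The comparison principle then gives $|w|\le Cx_n$, hence $\sup_{Q_{\rho_0}^+(0,T)}|w/x_n|<\infty$.

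\emph{Step 2: boundary Harnack (the core).} Let $v\ge 0$ solve \eqref{eq-general-of-tilde-h-x-n-1-original} on $Q_r^+$ with $v=0$ on $\{x_n=0\}$. Away from $\{x_n=0\}$ the weight $x_n^\alpha$ is bounded above and below, so Lemma \ref{lem-Harnack-s-Inequality} and the usual Harnack chaining control $v$ from above and below at interior points by its value at a reference point $P_*$ with $(P_*)_n\sim r$. I upgrade this to the ratio up to the boundary in two halves. For the upper half, since $x_n$ is itself a solution of \eqref{eq-general-of-tilde-h-x-n-1-original}, comparing $v$ with a supersolution comparable to $v(P_*)\,x_n/r$ on the boundary layer, together with Step 1 applied to $v$, gives $v/x_n\le C\,v(P_*)/r$ near $\{x_n=0\}$. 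For the lower half — where Lemma \ref{lem-barrier-fuction-for-subsolution-with-nontrivial-gradient} enters — normalize $v(P_*)=1$; the interior Harnack forces $v\ge c_0$ on the cube $K$ of Lemma \ref{lem-barrier-fuction-for-subsolution-with-nontrivial-gradient} rescaled into $Q_r^+$, and since $v\ge 0=f$ on the lateral boundary and $v\ge c_0m_0^{-1}f(\cdot,0)$ on the initial slice, the comparison principle gives $v\ge c_0m_0^{-1}f$; by the third line of \eqref{eq-cases-aligned-barrier-function-f}, $v$ then has normal derivative bounded below on $\partial B_{r/16}^+\cap\{x_n=0\}$ at time $T$, i.e. $v/x_n\ge c>0$ there, and because the centers $P_{r/4}(x')$ in that lemma range over $|x'|\le r/16$ this holds along the whole boundary portion. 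Combining the two halves over finitely many balls and keeping track of the parabolic time-lag exactly as in Lemma \ref{lem-Harnack-s-Inequality} yields $\sup_{Q_{r/2}^+}(v/x_n)\le C\inf_{Q_{r/4}^+}(v/x_n)$.

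\emph{Step 3: oscillation decay, conclusion, and the obstacle.} With $m_r,M_r$ as above (finite by Step 1), $v_1=w-m_rx_n$ and $v_2=M_rx_n-w$ are nonnegative solutions of \eqref{eq-general-of-tilde-h-x-n-1-original} on $Q_r^+$ vanishing on $\{x_n=0\}$. Since the suprema of $v_1/x_n$ and $v_2/x_n$ over the intermediate cube add up to at least $M_r-m_r$, one of them is $\ge\tfrac12(M_r-m_r)$; applying Step 2 to that one tightens either $m_{r/4}$ or $M_{r/4}$, giving
\begin{equation*}
\osc_{Q_{r/4}^+(0,T)}(w/x_n)\le\theta_0\,\osc_{Q_{r}^+(0,T)}(w/x_n),\qquad \theta_0:=1-\tfrac1{2C}<1 .
\end{equation*}
Iterating from $r=\rho_0$ gives $\osc_{Q_r^+(0,T)}(w/x_n)\le Cr^{\gamma}$ with $\gamma=\log(1/\theta_0)/\log 4>0$; hence $w(x,T)/x_n$ has a limit $L$ as $x\to 0$ in $B_r^+$ with $|w(x,T)/x_n-L|\le C|x|^{\gamma}$, and multiplying by $x_n\le|x|$ gives \eqref{eq-in-lemma-for-c-1-beta-regularity}. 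The main obstacle is Step 2: the natural shortcut — setting $\tilde v=v/x_n$ and quoting Theorem \ref{theorem-Holder-estimates-for-h} — fails, because $\tilde v$ solves an equation with a singular first-order term of order $x_n^{\alpha-1}a^{nj}\tilde v_{x_j}$ that lies outside the scope of that theorem, so the boundary Harnack for the ratio must be assembled by hand from the interior Harnack and the two barriers (the explicit supersolution $\sim x_n$ and, crucially, the spreading sub-barrier $f$ of Lemma \ref{lem-barrier-fuction-for-subsolution-with-nontrivial-gradient}), and matching the scales and the parabolic time-lag among the various half-cubes is the delicate part; a minor additional point is that, since the estimate is asserted at the single time $t=T$, for small $T$ one runs the cubes down to $t=0$ and uses $w_0\in C^{0,1}(H)$ there.
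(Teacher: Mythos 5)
Your strategy coincides with the paper's at its core: trap $w$ between linear profiles $A_k x_n$ and $B_k x_n$, use the spreading sub-barrier of Lemma \ref{lem-barrier-fuction-for-subsolution-with-nontrivial-gradient} to push one side inward by a fixed factor, and iterate at scales $16^{-k}$ with the parabolic time-lag $l=1-16^{\alpha-2}$ to obtain geometric decay of $\osc(w/x_n)$ and hence the H\"older estimate. The genuine difference is in how the per-scale improvement is packaged. You isolate a boundary Harnack inequality $\sup(v/x_n)\le C\inf(v/x_n)$ for nonnegative solutions of \eqref{eq-general-of-tilde-h-x-n-1-original} vanishing on $\{x_n=0\}$, and apply it to $w-m_r x_n$ and $M_r x_n-w$ with a sup dichotomy. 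The paper never states such a Harnack principle: it uses a \emph{measure} dichotomy instead, choosing whichever of $\{w'\ge\tfrac{A_0+B_0}{2}x_n'\}$, $\{w'\le\tfrac{A_0+B_0}{2}x_n'\}$ occupies at least half of $B_r^+$, invokes Lipschitz continuity of $w'-A_0x_n'$ to turn that into a pointwise lower bound $\ge m_1$ on the compact cube $K$ --- precisely the initial datum $m_0\mathbf{1}_K$ that Lemma \ref{lem-barrier-fuction-for-subsolution-with-nontrivial-gradient} requires --- and then runs the sub-barrier directly to improve $A_0$ to $A_1$ (or $B_0$ to $B_1$). Your modular formulation makes the logic transparent and would be the standard modern phrasing; the paper's direct route stays one step closer to the barrier lemma and avoids having to state and calibrate a separate Harnack principle across the several nested cubes. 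Two small simplifications you could have used: a quadratic supersolution is unnecessary for Step 1, since $Bx_n$ is an \emph{exact} solution of the non-divergence equation \eqref{eq-general-of-tilde-h-x-n-1-original} (its Hessian vanishes), and $w_0\in C^{0,1}(H)$ with $w_0|_{\{x_n=0\}}=0$ then gives $|w|\le Bx_n$ by comparison; and the interior Harnack you chain away from $\{x_n=0\}$ for this non-divergence equation is Krylov--Safonov rather than the divergence-form Lemma \ref{lem-Harnack-s-Inequality}.
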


\begin{proof}
For each $T>0$, let $r=r(lT)$ be given in Lemma \ref{lem-barrier-fuction-for-subsolution-with-nontrivial-gradient} for $l=1-(16)^{\alpha-2}$. We denote by 
\begin{equation*}
M_0=\max_{K}w_0 \qquad \mbox{and} \qquad m_0=\min_{K}w_0
\end{equation*}
for the set $K=\left\{x\in B_{r}^+:|x_i-P_{\frac{r}{4},i}|<\frac{r}{8}, \,\,P_{\frac{r}{4}}=(0,\cdots,0,\frac{r}{4})\in\R^n\,\, \mbox{and}\,\,i=1,\cdots,n\right\}$. Then, by the Lemma \ref{lem-barrier-fuction-for-subsolution-with-nontrivial-gradient}, there exists a solution $f$ of
\begin{equation*}
\begin{cases}
\begin{aligned}
f_t&=x_n^{\alpha}\left(a^{ij}f_{ij}\right) \qquad \mbox{in $B_{r}^+$}\\
f&=0 \qquad \qquad \quad \mbox{on $\partial B_{r}^{+}$}
\end{aligned}
\end{cases}
\end{equation*}
with the conditions $w(x,0)\geq f(x,0)$ in $B^+_{r}$ and
\begin{equation*}
|\nabla f(x,t)|\geq c_0>0, \quad \forall t\geq lT, \,\, x\in\partial B^+_{\frac{r}{16}}\cap\{x_n=0\}.
\end{equation*}
Hence,
\begin{equation*}
|\nabla w(x,t)|\geq |\nabla f(x,t)|\geq c_0>0, \quad \forall t\geq lT, \,\, x\in\partial B^+_{\frac{r}{16}}\cap\{x_n=0\}
\end{equation*}
and this implies that there exists a constant $A>0$ such that
\begin{equation}\label{eq-bounded-below-of-w-by-A-x-n}
w(x,t)\geq Ax_n \qquad \mbox{in $B^+_{\frac{r}{16}}\times\left[lT,T\right]$}.
\end{equation}
In addition, there also exists a constant $B>0$ such that
\begin{equation}\label{eq-bounded-above-of-w-by-B-x-n}
w(x,t)\leq Bx_n \qquad \mbox{in $B^+_{\frac{r}{16}}\times\left[lT,T\right]$}
\end{equation}
since $w_0\in C^{0,1}(H)$ and the function $Bx_n$ is a solution of \eqref{eq-general-of-tilde-h-x-n-1-original}. Now we are going to show \eqref{eq-in-lemma-for-c-1-beta-regularity} with scaling properties. To get them, we define the new function by
\begin{equation*}
w'(x',t')=16w(x,t) \qquad \left(x'=16x,\quad t'=(16)^{2-\alpha}\left(t-lT\right)\right).
\end{equation*}
Then, the function $w'$ is also a solution of \eqref{eq-general-of-tilde-h-x-n-1-original} with initial data $w(\cdot,lT)$. By \eqref{eq-bounded-below-of-w-by-A-x-n} and \eqref{eq-bounded-above-of-w-by-B-x-n}, we also obtain
\begin{equation}\label{eq-trapping-w'-with-A-and-B-x-n}
Ax_n'\leq w'(x',t') \leq Bx_n', \qquad \mbox{in $B_r^+\times[0,T]$}.
\end{equation}
Let's assume that $A_0$ and $B_0$ are the optimal constants for \eqref{eq-trapping-w'-with-A-and-B-x-n}. If 
\begin{equation*}
\left|\left\{x'\in B_r^+:w'(x',0)\geq \left(\frac{A_0+B_0}{2}\right)x_n'\right\}\right|\geq \frac{1}{2}\left|B_{r}^+\right|,
\end{equation*}
then there exist constants $\delta_1>0$ and $m_1>0$ such that
\begin{equation}\label{eq-w--A-x-n-bound-below-by-m-1-1}
w'(x',0)-A_0x_n'\geq m_1, \qquad \mbox{in $K$}.
\end{equation}
This is because $w'(x',0)-A_0x_n$ is Lipschitz continuous with respect to space variable. On the other hand, if
\begin{equation*}
\left|\left\{x'\in B_r^+:w'(x',0)\leq \left(\frac{A_0+B_0}{2}\right)x_n'\right\}\right|\geq \frac{1}{2}\left|B_{r}^+\right|,
\end{equation*}
then there exist constants $\delta'_1>0$ and $m'_1>0$ such that
\begin{equation*}
B_0x_n'-w'(x',0)\geq m'_1, \qquad \mbox{in $K$}.
\end{equation*}
Note that the constants $m_1$ and $m_1'$ are only obtained by $|A_0-B_0|$ and Lipschitz continuity. We first assume that
\begin{equation*}
\left|\left\{x'\in B_r^+:w'(x',0)\geq \left(\frac{A_0+B_0}{2}\right)x_n'\right\}\right|\geq \frac{1}{2}\left|B_{r}^+\right|.
\end{equation*}
Then, by similar argument as above, there is a function $f'$ such that
\begin{equation*}
w'(x',t')-A_0x_n'\geq f'(x',t') \quad \mbox{in $B^+_{r}\times(0,T)$} 
\end{equation*}
and
\begin{equation*} 
|\nabla f'(x',t')|\geq c_0>0, \quad \forall t'\geq lT, \,\, x'\in\partial B^+_{\frac{r}{16}}\cap\{x'_n=0\}.
\end{equation*}
Note that $m_1$ in \eqref{eq-w--A-x-n-bound-below-by-m-1-1} is only depending on $|B-A|$ and Lipschitz continuity of $w'$. Hence, the constant $c_0$ is proportion to $|A_0-B_0|$. Thus, there exists a constant $0<\kappa<\frac{15}{16}$ such that
\begin{equation*}
w'(x',t')-Ax_n'\geq \kappa(B_0-A_0)x'_n\qquad \mbox{in $B^+_{\frac{r}{16}}\times[lT,T]$}.
\end{equation*}
Similarly, if 
\begin{equation*}
\left|\left\{x'\in B_r^+:w'(x',0)\leq \left(\frac{A_0+B_0}{2}\right)x_n'\right\}\right|\geq \frac{1}{2}\left|B_{r}^+\right|,
\end{equation*}
then, we also have
\begin{equation*}
B_0x_n'-w'(x',t')\geq \kappa(B_0-A_0)x'_n\qquad \mbox{in $B^+_{\frac{r}{16}}\times[lT,T]$}.
\end{equation*}
Therefore, we get $B_1\geq A_1>0$ such that $0<A_0\leq A_1\leq B_1\leq B_0<\infty$, $B_1-A_1\leq (1-\kappa)(B_0-A_0)$ and 
\begin{equation}\label{eq-trapping-w'-with-A-and-B-x-n-2}
A_1x_n'\leq w'(x',t') \leq B_1x_n', \qquad \mbox{in $B_{\frac{r}{16}}^+\times[lT,T]$}.
\end{equation}
Hence, from \eqref{eq-trapping-w'-with-A-and-B-x-n-2},
\begin{equation*}
A_1x_n\leq w(x,t) \leq B_1x_n, \qquad \mbox{in $B_{\frac{r}{16^2}}^+\times[lT+(1-l)lT,T]$}.
\end{equation*}
By iteration arguments, we conclude that there exist series $\{A_k\}$ and $\{B_k\}$ such that $0<A_0\leq A_1\leq \cdots \leq A_k\leq A_{k+1}\leq \cdots\leq B_{k+1}\leq B_{k}\leq \cdots\leq B_1\leq B_0<\infty$, $B_{k+1}-A_{k+1}\leq (1-\kappa)(B_k-A_k)$ and 
\begin{equation*}
A_kx_n\leq w(x,t) \leq B_kx_n, \qquad \mbox{in $B_{\frac{r}{16^{k+1}}}^+\times[(1-(1-l)^{k+1})T,T]$}.
\end{equation*}
This will imply
\begin{equation}\label{eq-existence-linear-functional-Cz-corresponding-to-w}
\frac{|w(x,t)-Cx_n|}{x_n}\leq B_k-A_k \leq (1-\kappa)^{k}(B_0-A_0) , \qquad \mbox{in $B_{\frac{r}{16^{k+1}}}^+\times[(1-(1-l)^{k+1})T,T]$}
\end{equation}
for $C=\lim_{k\to\infty}A_k=\lim_{k\to\infty}B_k$. Now we get \eqref{eq-existence-linear-functional-Cz-corresponding-to-w} when $x\in B_{\frac{r}{16^{k+1}}}^+$. Hence, by the the property of logarithm function, we get
\begin{equation*}
\frac{|w(x,T)-Cx_n|}{x_n}\leq \left(\frac{16|x|}{r}\right)^{-\log_{16}(1-\kappa)}, \qquad \mbox{in $B^+_r$}.
\end{equation*}
Since $0<\kappa<\frac{15}{16}$, we have $0<-\log_{16}(1-\kappa)<1$. Hence \eqref{eq-in-lemma-for-c-1-beta-regularity} holds for constant $\gamma=-\log(1-\kappa)$ and the lemma follows.
\end{proof}

From the previous Lemma \ref{lem-C-1-gamma-regularity-near-the-boundary}, we can cover the remaining part of missing step for the long time existence.

\begin{cor}\label{cor-for-holder-estimate-for-h-z}
Under the hypotheses of Lemma \ref{lem-C-1-gamma-regularity-near-the-boundary}, we also have
\begin{equation*}
|w_z(x)-L|\leq C_1|x-x_0|^{\gamma}, \qquad \mbox{in $B_r^{+}(x_0)$}
\end{equation*}
for some constant $C_1>0$. 
\end{cor}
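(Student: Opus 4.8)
The plan is to deduce the estimate from Lemma~\ref{lem-C-1-gamma-regularity-near-the-boundary}, applied not only at $x_0$ but at every nearby boundary point, together with interior parabolic estimates at the right scale. After translating so that $x_0=0$, recall that the iteration in the proof of Lemma~\ref{lem-C-1-gamma-regularity-near-the-boundary} in fact delivers, through \eqref{eq-existence-linear-functional-Cz-corresponding-to-w}, the space--time bound
\begin{equation*}
\bigl|w(x,t)-L(y^{0})\,x_n\bigr|\le C_0\bigl(|x-y^{0}|+(T-t)^{\frac{1}{2-\alpha}}\bigr)^{1+\gamma}
\end{equation*}
on a parabolic neighbourhood of $(y^{0},T)$, for every $y^{0}\in\{x_n=0\}$ close to $0$, with $L(0)=L$. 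Subtracting two such expansions at a common point with $x_n\sim|y^{0}-\tilde y^{0}|$ gives $|L(y^{0})-L(\tilde y^{0})|\le C|y^{0}-\tilde y^{0}|^{\gamma}$; in particular $L(\cdot)\in C^{\gamma}$ and $|L(y^{0})-L|\le C|y^{0}|^{\gamma}$, which is already the assertion at boundary points, where $w_z$ means the trace $L(\cdot)$ (that $w/x_n\to L(y^{0})$ as $x_n\to0$ follows by dividing the expansion centred at $y^{0}$ by $x_n$).

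For an interior point $\bar x\in B_r^{+}$ with $\bar x_n>0$, let $\bar x^{0}=(\bar x',0)$ and put $v=w-L(\bar x^{0})x_n$, which again solves \eqref{eq-general-of-tilde-h-x-n-1-original} since $L(\bar x^{0})x_n$ does. Rescale at the scale $\bar x_n$:
\begin{equation*}
\hat v(y,s)=\bar x_n^{-1}\,v\bigl(\bar x+\bar x_n y,\ T+\bar x_n^{\,2-\alpha}s\bigr),\qquad (y,s)\in B_{1/2}\times(-c,0].
\end{equation*}
On this cylinder $x_n=\bar x_n(1+y_n)\in(\tfrac12\bar x_n,\tfrac32\bar x_n)$, so $\hat v$ solves the uniformly parabolic equation $\hat v_s=(1+y_n)^{\alpha}a^{ij}(\bar x+\bar x_n y,\cdot)\,\hat v_{ij}$ with ellipticity constants independent of $\bar x_n$; and, since $|x-\bar x^{0}|+(T-t)^{1/(2-\alpha)}\le C\bar x_n$ there, the displayed bound at $\bar x^{0}$ gives $\|\hat v\|_{L^{\infty}(B_{1/2}\times(-c,0])}\le C\bar x_n^{\gamma}$. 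Interior parabolic regularity (Schauder, using that the coefficients of \eqref{eq-for-sol-h-x-i-of-fixed-bondary-problem-in-intro} are Lipschitz by the estimates of Sections~3--4, the rescaled coefficients having regularity bounds uniform in $\bar x_n$) then yields $|\partial_{y_n}\hat v(0,0)|\le C\|\hat v\|_{L^{\infty}}\le C\bar x_n^{\gamma}$, i.e.\ $|\partial_{x_n}w(\bar x,T)-L(\bar x^{0})|\le C\bar x_n^{\gamma}$. Hence
\begin{equation*}
|w_z(\bar x,T)-L|\le|\partial_{x_n}w(\bar x,T)-L(\bar x^{0})|+|L(\bar x^{0})-L|\le C\bar x_n^{\gamma}+C|\bar x'|^{\gamma}\le C_1|\bar x-x_0|^{\gamma},
\end{equation*}
and letting $\bar x_n\to0$ recovers the continuous extension of $w_z$ to $\{x_n=0\}$; this is the corollary.

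The crux is the choice of scale. Rescaling around $\bar x$ at the scale $|\bar x|$ would force the parabolic cylinder across $\{x_n=0\}$, landing back on a degenerate problem; rescaling at the scale $\bar x_n$ but using the expansion centred at $x_0$ only bounds $\|v\|_{L^{\infty}}$ by $|\bar x|^{1+\gamma}$ on the cylinder, which after dividing by $\bar x_n$ is too large when $\bar x_n\ll|\bar x|$. The remedy is to first recentre Lemma~\ref{lem-C-1-gamma-regularity-near-the-boundary} at the nearby boundary point $\bar x^{0}$, so that on the scale-$\bar x_n$ cylinder around $\bar x$ the solution is genuinely $O(\bar x_n^{1+\gamma})$ while the rescaled equation is genuinely uniformly parabolic; after that, the interior Schauder step is routine. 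It is also here that the regularity of the coefficients --- Hölder, and in the application Lipschitz --- is used, since for merely measurable coefficients $w_z$ need not be classically defined in the interior at all.
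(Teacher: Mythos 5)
Your proof is correct but takes a genuinely different---and substantially more careful---route than the paper's. The paper's argument is the short one you anticipated would be tempting but fragile: it writes $w_z(x^a)=\lim_{b\to a}\frac{w(x^b)-w(x^a)}{b-a}$, inserts Lemma~\ref{lem-C-1-gamma-regularity-near-the-boundary} in the form $|w(x^a)-La|=C_1|x^a-x_0|^{1+\gamma}+\text{h.o.t.}$ (and similarly at $x^b$), and subtracts, claiming the difference quotient of the error is $O(|x^a-x_0|^{\gamma})$. As you implicitly observe in your ``choice of scale'' discussion, this is exactly the step that does not follow: the Lemma gives only a one--sided bound $|w(x,T)-Lx_n|\le C_0|x-x_0|^{1+\gamma}$ anchored at the single boundary point $x_0$, and such a pointwise bound on a function does not transfer to a pointwise bound on its difference quotients as $b\to a$ (divide the bound on the numerator by $|b-a|$, which can be arbitrarily small relative to $|x^a-x_0|$); the ``h.o.t.'' equality is not something the Lemma provides. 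Your version repairs this in the standard and rigorous way: you re-run the Lemma centred at every nearby boundary point $y^0$, note that the iterative estimate \eqref{eq-existence-linear-functional-Cz-corresponding-to-w} is in fact a parabolic bound $|w(x,t)-L(y^0)x_n|\le C_0(|x-y^0|+(T-t)^{1/(2-\alpha)})^{1+\gamma}$, deduce that $L(\cdot)\in C^{\gamma}$ along $\{x_n=0\}$ by comparing two expansions at a common point of height $\sim|y^0-\tilde y^0|$, and then at an interior point $\bar x$ pass to the uniformly parabolic equation obtained by blowing up at the scale $\bar x_n$ about the nearest boundary point $\bar x^0$, where the $L^{\infty}$ smallness $\|\hat v\|_{L^\infty}\lesssim\bar x_n^{\gamma}$ combined with interior Schauder yields $|\partial_{x_n}w(\bar x,T)-L(\bar x^0)|\lesssim\bar x_n^{\gamma}$; the triangle inequality with $|L(\bar x^0)-L|\lesssim|\bar x'|^{\gamma}$ finishes. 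What your approach buys is a proof that actually closes, at the cost of more work (invoking the full strength of the Lemma's iteration, not just its endpoint statement, plus an interior Schauder step). One caveat you correctly flag, and which the paper's proof also silently relies on, is that the Corollary is stated ``under the hypotheses of Lemma~\ref{lem-C-1-gamma-regularity-near-the-boundary},'' i.e.\ merely measurable $a^{ij}$, under which the interior Schauder step is unavailable and $w_z$ need not even exist pointwise; in the intended application the coefficients are Lipschitz and both arguments go through.
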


\begin{proof}
Let $x_0$ be a point on $\{x_n=0\}$ and let $x^s=(x',s)$ be a point in $B_r^+(x_0)$. Then,
\begin{equation}\label{eq-expression-of-w-sub-z-at-x-a}
w_z(x^a)=\lim_{b\to a}\frac{w(x^b)-w(x^a)}{b-a}.
\end{equation}
By the previous Lemma (Lemma \ref{lem-C-1-gamma-regularity-near-the-boundary}), we can get
\begin{equation}\label{eq-taylor-expansion-of-w-L-a}
|w(x^a)-La|=C_1|x^a-x_0|^{1+\gamma}+\mbox{\textbf{h.o.t.} of $|x^a-x_0|$}
\end{equation}
and
\begin{equation}\label{eq-taylor-expansion-of-w-L-b}
|w(x^b)-Lb|=C_1|x^b-x_0|^{1+\gamma}+\mbox{\textbf{h.o.t.} of $|x^b-x_0|$}
\end{equation}
for some constant $C_1>0$ if $x^a$ is sufficiently close to $x^b$. Here, \textbf{h.o.t.} means 'higher order terms'. Then, by \eqref{eq-expression-of-w-sub-z-at-x-a}, \eqref{eq-taylor-expansion-of-w-L-a} and \eqref{eq-taylor-expansion-of-w-L-b}, we obtain
\begin{equation*}
|w_z(x^a)-L|\leq \left|\lim_{b\to a}\frac{w(x^b)-Lb-(w(x^a)-La)}{b-a}\right|\leq C_1|x^a-x_0|^{\gamma}+\mbox{\textbf{h.o.t.} of $|x^a-x_0|$}.
\end{equation*}
Since $x^a$ is chosen arbitrary in $B_{r}^+(x_0)$, we can get a desired result.
\end{proof}

\section{Proof of Main Theorem}\label{section-final}
\setcounter{equation}{0}
\setcounter{thm}{0}

\subsection{Local coordinate change}
To motivate the proof of Main Theorem, we will first compute the
transformation of the equation
\begin{equation*}
f_t=mf^{\alpha}\La f
\end{equation*}
when one exchanges dependent and independent variables near the
boundary. This change of coordinates converts the boundary into a
flat boundary. More precisely, assume that the function $f$ belong
to the space $C_s^{2+\alpha}(\overline{\Omega}\times[0,T])$. Let
$X_0=(x^0,t_0)=(x'^0,x_n^0,t_0)$ at the boundary $\partial \overline{\Omega} \times
(0,T]$. We can assume (by rotating the coordinates) that
$f_{x_1}(X_0)=\cdots=f_{x_{n-1}}(X_0)=0$, and $f_{x_n}(X_0)>0$. It
follows from the Implicit Function Theorem that if the number $\eta$
is sufficiently small, we can solve the equation $z=f(x,t)$ with
respect to $x_n$ around the point $X_0$. This yields to a map
$x_n=h(x',z,t)$ defined in a small box $B^+_{\eta}(x'^0,0,t_0)$. We wish to
compute the evolution equation of $h$ from the one of $f$. To
compute the evolution of $h$, we use the identities
\begin{equation*}
\begin{array}{c}
f_{x_n}=\frac{1}{h_z},\quad f_{x_i}=-\frac{h_{x_i}}{h_z}, \quad
f_t=-\frac{h_t}{h_z},\quad f_{x_nx_n}=-\frac{1}{h_z^3}h_{zz},\\
f_{x_ix_i}=-\frac{1}{h_z}\left(\frac{h_{x_i}^2}{h_z^2}h_{zz}-2\frac{h_{x_i}}{h_z}h_{x_iz}+h_{x_ix_i}\right)
\quad(i=1,\cdots,n-1).
\end{array}
\end{equation*}
This follows immediately from above computations, the equation
\begin{equation*}
f_t-m{f}^{\alpha}\La f=0
\end{equation*}
transforms into the equation
\begin{equation}\label{eq-for-sol-h-of-fixed-bondary-problem}
M(h)=h_t-z^{\alpha}\Bigg[{\La}_{x'}h+\Bigg(-\frac{1+|\nabla_{x'}h|^2}{h_{z}}\Bigg)_{z}\Bigg]=0.
\end{equation}
Set
\begin{equation*}
a^{nn}=\frac{1+|\nabla_{x'}h|^2}{{h_z}^2}, \qquad a^{nk'}=-\frac{2h_{k'}}{h_z}\quad \mbox{and} \quad a^{k'l'}=\delta_{k'l'}
\end{equation*}
for $k',l'=1,\cdots,n-1$. Then, $h$ is a solution of the equation
\begin{equation*}
L[w]=w_t-z^{\alpha}\left(a^{kl}w_{kl}\right), \qquad (k,l=1,\cdots,n).
\end{equation*}
The operator $M$ defined above becomes degenerate when $z=0$. We can also easily compute its first derivatives with respect to $x_i$, $(i=1,\cdots,n-1)$:
\begin{equation}\label{eq-aligned-for-h-x-i}
\begin{aligned}
L[w:x_i]&=w_t-z^{\alpha}\left(\left(\frac{1+|\nabla_{x'}h|^2}{{h_z}^2}\right)w_z-\sum_{j=1}^{n-1}\frac{2h_{x_j}}{h_z}w_{x_j}\right)_z+z^{\alpha}\D_{x'}\cdot(\D_{x'}w)
\end{aligned}
\end{equation}
Set
\begin{equation*}
a^{nn}=\frac{1+|\nabla_{x'}h|^2}{{h_z}^2}, \qquad a^{k'n}=-\frac{2h_{k'}}{h_z}, \qquad a^{nk'}=0 \quad \mbox{and} \quad a^{k'l'}=\delta_{k'l'}
\end{equation*}
for $k',l'=1,\cdots,n-1$. Then, $h_{x_i}$ is a solution of the equation
\begin{equation*}
L[w:x_i]=w_t-z^{\alpha}\nabla_{l}\left(a^{kl}\nabla_{k}w\right), \qquad (k,l=1,\cdots,n).
\end{equation*}

We finish with the proof of Theorem \ref{The-Main-Theorem-of-this-paper}.

\begin{proof}[\textbf{The Proof of Theorem \ref{The-Main-Theorem-of-this-paper}}]
By the Theorem \ref{thm-short}, there exists a solution $f$ of \eqref{eq-cases-main-of-this-paper-section-PMT} which is $C_s^{2,\overline{\gamma}}$-smooth up to the boundary, when $0<t<\tau$ for some constant $\tau>0$. Let $(0,\tau_0)$ be the maximal interval of existence for $C_s^{2,\overline{\gamma}}$-solution $f$ of \eqref{eq-cases-main-of-this-paper-section-PMT} and suppose that $\tau_0<\infty$. By scaling time around $t=\tau_0$, we can assume that
\begin{equation*}
\tau_0=1.
\end{equation*}
We will show the solution $f$ is in $C_s^{2,\overline{\gamma}}(\Omega)$ at $t=1$. Then, by applying Theorem \ref{thm-short} again, we can extend $C_s^{2,\overline{\gamma}}$-smoothness of $f$ to $t=1+\tau_1$ for some constant $\tau_1>0$. By maximality of $\tau_0=1$, contradiction arise. Hence, $\tau_0=\infty$ and theorem follows.\\ 
\indent Since $\Omega$ is compact domain, we can express $\Omega$ as the finite union
\begin{equation}\label{eq-union-of-Omega-by-finite-compact-sets}
\Omega=\Omega_0\cup\left(\bigcup_{l\geq 1}\Omega_l\right)
\end{equation}
of compact domains in such a way that
\begin{equation*}
\textbf{dist}(\Omega_0,\partial\Omega)\geq \frac{\rho}{2}>0
\end{equation*}
and for all $l\geq 1$
\begin{equation*}
\Omega_l=B_{\rho}(x_l)\cap\Omega
\end{equation*}
with $B_{\rho}(x_l)$ denoting the ball centere at $x_l\in\partial\Omega$ of radius $\rho>0$. The number $\rho>0$ will be determined later. The equation \eqref{eq-main-for-f}, when restricted on the interior domain $\Omega_0$, is nondegenerate. Therefore, the classical Schauder theory for linear parabolic equations implies that 
\begin{equation}\label{eq-interior-C-2-overline-gamma-estimate-of-f}
f\in C_s^{2,\overline{\gamma}}(\Omega_0) \qquad \mbox{at $t=1$}.
\end{equation} 
Hence, we are going to concentrate our attention on the domains $\Omega_l$, $(l\geq 1)$, close to the boundary of $\Omega$. Under the local coordinate change above, we can change the function $f$ defined on $\Omega_l$ into the function $h$ defined on $\mathcal{D}_l\in H$. In addition,
\begin{equation*}
h\in C_s^{2,\overline{\gamma}}(\mathcal{D}_l) \qquad \left(0\leq t<1\right).
\end{equation*}
By the maximality of $\tau_0=1$, $h$ is no longer in $C_s^{2,\overline{\gamma}}(\mathcal{D}_l)$ at time $t=1$. However the existence theory gave $h\in W^{1,2}(\mathcal{D}_l)$ (See chapter 5 in \cite{Va1} for the details). By the Lemmas \ref{L-infty-estimate}, \ref{lem-Gradient-estimate} and \ref{lem-Non-degeneracy-estimate}, the coefficients of $L$ in \eqref{eq-for-sol-h-of-fixed-bondary-problem} and \eqref{eq-aligned-for-h-x-i} satisfy the hypotheses in the corollary \ref{cor-for-holder-estimate-for-h-z} and the Lemma \ref{theorem-Holder-estimates-for-h}, respectively. Then, the Lemma \ref{theorem-Holder-estimates-for-h} and the corollary \ref{cor-for-holder-estimate-for-h-z} tell us that
\begin{equation*}
\nabla h\in C^{\overline{\gamma}}_s(Q^+_{r}(x_l,1)) \qquad \mbox{where $\,\,\overline{\gamma}=\frac{2\gamma}{2-\alpha}$ and $r<1$.}
\end{equation*}
Hence, the coefficients of the equation \eqref{eq-for-sol-h-of-fixed-bondary-problem} satisfy the assumption of Lemma 3.2 in \cite{KL}. By Lemma 3.2 in \cite{KL},
\begin{equation}\label{eq-c-s-1-overline-gamma-estimate-of-h}
h\in C^{2,\overline{\gamma}}_s(Q^+_{\frac{r}{2}}(x_l,1)).
\end{equation}
Let's choose $\rho>0$ so small that $\mathcal{D}_l\in B^+_{\frac{1}{2}}(x_l)$. Then
\begin{equation*} 
h\in C^{2,\overline{\gamma}}_s(\mathcal{D}_l)\qquad \mbox{at $t=1$}.
\end{equation*}
This immediately implies
\begin{equation}\label{eq-boundary-C2-overline-gamma-estimate-of-f} 
f\in C^{2,\overline{\gamma}}_s(\Omega_l)\qquad \mbox{at $t=1$}.
\end{equation}
Therefore, by \eqref{eq-union-of-Omega-by-finite-compact-sets}, \eqref{eq-interior-C-2-overline-gamma-estimate-of-f} and \eqref{eq-boundary-C2-overline-gamma-estimate-of-f}, we get
\begin{equation*}
f\in C^{2,\overline{\gamma}}_s(\Omega) \qquad \mbox{at $t=1$}
\end{equation*}
and theorem follows.
\end{proof}
{\bf Acknowledgement} Sunghoon Kim was supported by Basic Science Research Program through the National Research Foundation of Korea(NRF) funded by the Ministry of Education, Science and Technology(2011-0030749).

\end{document}